\DeclareOldFontCommand{\rm}{\normalfont\rmfamily}{\mathrm}
\def\F{\Bbb F}
\def\ad{\operatorname{ad}}
\def\Der{\operatorname{Der}}
\def\dim{\operatorname{dim}}
\def\End{\operatorname{End}}
\def\Tr{\operatorname{Tr}}
\def\Hom{\operatorname{Hom}}
\def\Ker{\operatorname{Ker}}
\def\Im{\operatorname{Im}}
\def\span{\operatorname{Span}}
\def\Nil{\operatorname{Nil}}
\def\g{\frak g}
\def\a{\frak{a}}
\theoremstyle{plain}\swapnumbers
\newtheorem{Theorem}{Theorem}[section]
\newtheorem{Lemma}[Theorem]{Lemma}
\newtheorem{Prop}[Theorem]{Proposition}
\newtheorem{Cor}[Theorem]{Corollary}
\newtheorem{Remark}[Theorem]{Remark}
\title[Hom-Lie algebra structures on quadratic Lie algebras]
{Hom-Lie algebra structures on quadratic Lie algebras
and twisted invariant Killing-like forms defined on them}
\author{R. Garc\'ia-Delgado, G. Salgado, O. A. S\'anchez-Valenzuela}
\address{CIMAT-Unidad M\'erida}
\email{rosendo.garcia@cimat.mx}
\email{gsalgado@fciencias.uaslp.mx, gil.salgado@gmail.com}
\email{adolfo@cimat.mx}
\keywords {Hom Lie algebras; Quadratic Lie algebras; Central extensions; Simple algebra; connections}
\subjclass{
Primary:
17Bxx, 17B60.  
Secondary:
17B20, 17B30, 17B40
}
\date{\today}
\begin{document}

\maketitle

\begin{abstract}
Hom-Lie algebras defined on
central extensions
of a given quadratic Lie algebra
that in turn admit an invariant metric,
are studied.
It is shown how some of these algebras are naturally equipped with
other symmetric, bilinear forms that satisfy an invariant condition
for their twisted multiplication maps.
The twisted invariant bilinear forms
so obtained resemble 
the Cartan-Killing forms defined
on ordinary Lie algebras. 
This fact allows one to reproduce on 
the Hom-Lie algebras hereby studied, some results that are 
classically associated to the ordinary Cartan-Killing form.
\end{abstract}

\section*{Introduction}

The algebraic study of Hom-Lie algebras can be traced back to the
work of Hartwig,  Larsson  and  Silvestrov (see \cite{HLS}).
A Hom-Lie algebra is a triple $(\g,\mu,\alpha)$ 
consisting of a vector space $\g$ (over a fixed ground field $\Bbb F$
which will be assumed to be 
of characteristic zero),
a skew-symmetric bilinear map $\mu:\g\times\g\to\g$
and an $\Bbb F$-linear endomorphism $\alpha:\g\to\g$ (also called
{\it the twist map\/} of the algebra) that satisfies the following
{\it twisted Jacobi identity\/:}
$$
\displaystyle{\sum_{\circlearrowleft\{x,y,z\}}}\!\!\!\mu(\alpha(x),\mu(y,z))=0.
$$
When this identity is satisfied for $\alpha=\operatorname{Id}_\g$,
the triple $(\g,\mu,\alpha)$ is a Lie algebra with Lie bracket 
$[\,\cdot\,,\,\cdot\,]=\mu(\,\cdot\,,\,\cdot\,)$.
Thus, one may say that Lie algebras form a subclass of Hom-Lie algebras.  
Other subclass is obtained by restricting $\alpha$ to be
a homomorphism for the product $\mu$;
one may refer to the Hom-Lie algebras thus restricted as 
{\it multiplicative\/.} These can be further limited by requiring
$\alpha$ to be an invertible homomorphism; that is, an automorphism in the
category of pairs $(\g,\mu)$ giving
rise to the nowadays called {\it regular Hom-Lie algebras\/.}
Special families of these restrictions have been studied
by starting with an ordinary Lie algebra $(\g,[\,\cdot\,,\,\cdot\,])$
and looking for the conditions required to satisfy the twisted Jacobi identity;
either when $\alpha$ is a Lie algebra endomorphism or 
when it is a Lie algebra automorphism.
Not so much work has been done for the general case in which 
$\mu$ is not a Lie bracket in
$\g$ nor for the general triples
$(\g,\mu, \alpha)$
for which $\alpha$ is not necessarily a morphism for 
the {\it twisted product\/} $\mu$
---as one usually refers to $\mu$ when it is not a Lie bracket.

\medskip
Some of the classical concepts and constructions for Lie algebras
can be generalized to the realm of Hom-Lie algebras; {\it eg\/,} the 
concepts of Hom-Lie subalgebra, Hom-Lie ideal or Hom-Lie morphism
are defined in the obvious standard way.
The concept of {\it representation of a Hom-Lie algebra\/}
leads one to look at
the linear maps $x\mapsto\operatorname{ad}_{\mu}(x)=\mu(x,\,\cdot\,):\g\to\g$ 
under the context of the possible Hom-Lie algebra structures
that might be defined on $\operatorname{End}_{\Bbb F}(\g)$.

\medskip
There are some available works dealing with the representation theory
of Hom-Lie algebras under various restrictions (see for example
\cite{ABM} or \cite{Sheng}).
There are also some available works studying invariant quadratic forms
on $\g$ under the hypotheses that
$\g$ is a Lie algebra with Lie bracket $[\,\cdot\,,\,\cdot\,]$
and $\alpha$ is
a Lie algebra automorphism that satisfies the
twisted Jacobi identity.

\medskip
Motivated by the representation theory of Hom-Lie algebras on the one hand
and by the geometric condition arising from some special invariant
quadratic forms on them on the other, we
were led to some results that are somehow reminiscent of the
Cartan-Killing form of a Lie algebra. 
Namely, for a Hom-Lie algebra $(\g,\mu,\alpha)$
we consider the symmetric bilinear form 
$K:\g\times\g\to\Bbb F$ given by
$$
K(x,y) = \operatorname{Tr}(\operatorname{ad}_{\mu}(x)
\circ\operatorname{ad}_{\mu}(y)\circ\alpha).
$$
Under certain conditions 
$\alpha\circ\mu\,\left(\,\cdot\,,\,\cdot\,\right)$ is a Lie bracket
(see {\bf Thm. \ref{Teorema Cartan-Killing}})
and $K$ satisfies the invariance property,
$$
K\left(\operatorname{ad}_{\mu}(x)(y),z\right)+
K\left(y,\operatorname{ad}_{\mu}(x)(z)\right) = 0.
$$
Write $[\,\cdot\,,\,\cdot\,]$ instead of
$\alpha\circ\mu\,\left(\,\cdot\,,\,\cdot\,\right)$ when the latter is a Lie bracket.
It turns out
that if $K(x,[y,z])=0$ for any $x,y,z\in \g$, then $\g$ is a solvable Lie algebra.
Also, $K=0$, if and only if $\g$ is a nilpotent Lie algebra. 
It can be proved, however, that if the Hom-Lie algebra $(\g,\mu,\alpha)$ is nilpotent
then $K=0$ (see {\bf Prop. \ref{GilStatement}}), 
but the converse in the Hom-Lie category is not true:
If $K=0$, the Hom-Lie algebra $(\g,\mu,\alpha)$ is not necessarily nilpotent
(see \S 5 below).
On the other extreme, $K$ is non-degenerate if and only if the Cartan-Killing
form $\kappa$ of $(\g,[\,\cdot\,,\,\cdot\,])$ is non-degenerate, which implies that $\g$ is
a semisimple Lie algebra.
As far as we know, no previous
steps had been taken in this direction and we have found them interesting 
and appealing as $K$ gives a way to relate the Hom-Lie algebra $(\g,\mu,\alpha)$
with the Lie algebra $(\g,[\,\cdot\,,\,\cdot\,])$
(see {\bf Thm. \ref{Teorema Cartan-Killing}}).

\medskip
The origin of this work is \cite{GSV}, where the following
problem is addressed and solved:
{\sl Given a quadratic Lie algebra $\g$
having a central ideal $V$ such that the quotient $\g_0=\g/V$
has the structure of a quadratic
Lie algebra whose invariant metric $B_0:\g_0\times\g_0\to\Bbb F$
is not given by the restriction of the invariant bilinear form 
$B:\g\times\g\to\Bbb F$, can the structure of $\g$ be recovered from that of $\g_0$?}
We have shown in \cite{GSV} how to reconstruct $\g$ from $\g_0$ and $V$ and
how to produce an invariant metric $B$ on it, given the fact
that the extension
$V\hookrightarrow\g\to\g/V\simeq\g_0$ is central.

\medskip
The aim of this work is to show that
a quadratic Lie algebra $\g_0$ having
a central extension $\g=\g_0\oplus V$ equipped with an
invariant metric $B$, 
can also be equipped with skew-symmetric 
bilinear maps  $\mu_0$ and linear
endomorphisms $\alpha_0:\g_0\to\g_0$ making the triples
$(\g_0,\mu_0,\alpha_0)$ into Hom-Lie algebras
which also admit bilinear forms 
$K_0:\g_0\times\g_0\to\Bbb F$
of the Cartan-Killing type described above, 
satisfying,
$$
K_0(\mu_0(x,y),z)=K(x,\mu_0(y,z)),\quad\text{for all\ }x,y,z \in \g_0.
$$
We shall show how to produce two different 
Hom-Lie algebra structures on a central extension 
$\g=\g_0\oplus V$
of a Lie algebra $\g_0$ by a central ideal $V$
---say, $(\g,\mu,\alpha)$ and $(\g,\mu,\alpha^\prime)$, repectively---
one of which occurs when the corresponding Lie-algebra sequence
$V\hookrightarrow\g\twoheadrightarrow\g_0$ does not split.
For the Hom-Lie algebra produced in the non-split case,
we show how to associate
a linear  (algebraic) connection ---which we write as a 
bilinear {\it multiplication\/} map $(x,y)\mapsto xy\,$--- adapted to 
the twisted product $\mu$ (see {\bf Thm. \ref{teorema}}).
In this case,
we show that when the quadratic central extension
is produced in such a way that $V$ is isotropic,
this multiplication map on $\g$,
is neither symmetric ({\it ie\/,} commutative) nor skew-symmetric
(see \textbf{Lemma \ref{invariancia hom-lie}}).

\vfill

\medskip
\section{Central extensions of Lie algebras;\\
Notation and conventions}

\medskip
All Lie algebras in this paper are finite dimensional
over an algebraically closed field $\F$ of characteristic zero.
We shall closely follow the approach of \cite{GSV} with a slight
change of notation. For the sake of being self-contained and for
our readers' benefit, we reproduce here the setting used in \cite{GSV}
and quote the main results there on which we now base this work.

\medskip
Let $\g$ be a Lie algebra and let $[\,\cdot\,,\,\cdot\,]:\g\times\g\to\g$
be its Lie bracket. 
The Lie algebra $\g$ {\bf is} said to be {\bf quadratic} if it comes equipped
with a non-degenerate, symmetric, bilinear form, $B:\g\times\g\to\Bbb F$, satisfying,
$B([x,y],z)=B(x,[y,z])$, for any $x$, $y$ and $z$ in $\g$.
The bilinear form $B$ {\bf is} said to be {\bf an invariant metric\/} 
on $\g$ if this property is satisfied,
or say simply that $B$ {\bf is invariant}.

\medskip
For a given non-degenerate, symmetric, bilinear form 
$B:\g\times\g\to\Bbb F$ on a vector space $\g$,
let $B^{\flat}:\g\to\g^\ast=\Hom(\g,\F)$ be the map defined by
$B^{\flat}(x)\,(z)=B(x,z)$, for all $z \in\g$, and each $x\in\g$.
Since $\dim\g$ is assumed to be finite, one may define the inverse map,
$B^\sharp:\g^*\to\g$, satisfying the condition,
$B(B^\sharp(\theta),y)=\theta(y)$, for any $\theta\in\g^*$ and any $y\in\g$. 
When $\g$ is a Lie algebra and $B$ is an invariant metric on it,
$B^{\flat}:\g \to \g^\ast$ is actually an
isomorphism that intertwines the adjoint action of $\g$ in $\g$ with its
coadjoint action in $\g^\ast$;
that is,  $B^{\flat}(\ad(x)(y))\,(z) = (\ad^\ast\!(x) B^{\flat}(y))\,(z)$, 
where $\ad^\ast\!(x)(\theta)=-\theta\circ\ad(x)$, for any $\theta\in\frak{g}^\ast$.

\medskip
For any Lie algebra $(\g,[\,\cdot\,,\,\cdot\,])$, we shall denote by
$\Gamma(\g)$ the vector subspace of $\End(\g)$ consisting of those
linear maps that {\it commute with the adjoint action\/:}
$$
\Gamma(\g)=\{T \in \End(\g)\,\mid\,T\circ\ad(x)=\ad(T(x)),
\ \text{for all\ } x \in \g\, \}.
$$
We shall refer to the linear maps $T\in \Gamma(\g)$ as
{\bf $\ad$-equivariant maps}. They are also referred to
as {\bf centroids} in the literature (see  \cite{Zhu}).
If $B:\g\times\g\to\Bbb F$ is an invariant metric on $\g$,
we shall refer ourselves to the {\bf self-adjoint} linear operators in $\End(\g)$
as $B$-{\bf symmetric maps}.
We shall use the following notation
for $B$-{\bf symmetric $\ad$-equivariant maps}:
$$
\Gamma_{B}(\g)=\{\,T \in
\Gamma(\g)\,\mid\,B\left(T(x),y\right)=B\left(x,T(y)\right),\ \text{for all\ }x,y \in \g\,\}.
$$
It is easy to see that a bilinear form 
$\tilde{B}:\g\times\g\to\F$ yields another invariant metric on $\g$, 
if and only if there is
an {\it invertible map\/}
$T \in \Gamma_{B}(\g)$
such that $\tilde{B}(x,y)=B\left(T(x),y\right)$ for all $x,y \in \g$.

\medskip
Let $(\g_0,[\,\cdot\,,\,\cdot\,]_0)$ be a Lie algebra and let $V$ be a 
finite-dimensional vector space
on which $\g_0$ {\bf acts via the trivial representation.} 
A skew-symmetric bilinear map $\theta:\g_0\times\g_0\to\ V$ is a 
\textbf{2-cocycle of $\g_0$ with values in $V$}, if for any 
$x$, $y$ and $z$ in $\g_0$,
$$
\theta(x,[y,z]_0)+\theta(y,[z,x]_0)+\theta(z,[x,y]_0)=0.
$$
The vector space of $2$-cocycles of $\g_0$ with values in $V$
is usually denoted by $Z^2(\g_0,V)$. When the context makes it clear, 
one simply refers to $\theta$'s in $Z^2(\g_0,V)$
as {\bf $2$-cocycles}. A \textbf{$2$-coboundary} is a $2$-cocycle $\theta$ for which
there exists a linear map  $\tau:\g_0\to V$, satisfying
$\theta(x,y)=\tau([x,y]_0)$, for all $x,y \in \g_0$. 
It is well-known that when $\theta$
is a $2$-cocycle,
one may define a Lie bracket $[\,\cdot\,,\,\cdot\,]:\g\times\g\to\g$
(depending on $\theta$) in the vector space direct sum $\g = \g_0\oplus V$,
by means of,
\begin{equation}
\label{tc1}
[x+u,y+v] = [x,y]_0 + \theta(x,y),\quad\text{for all\ }x,y\in\g_0\ \ \text{and}\ \ u,v\in V.
\end{equation}
And conversely: if
\eqref{tc1} is a Lie bracket in $\g=\g_0\oplus V$,
then $\theta$ must be a $2$-cocycle. The resulting Lie algebra $\g$ is known as  
{\bf the central extension of $\g_0$ by $V$ associated to the cocycle $\theta$.}
Under these conditions, $V\subset\g$ {\bf is an ideal contained in the center of} $\g$
and there is a short exact sequence of Lie algebras
induced by the canonical projection $\pi:\g \to \g_0$, given by:
\begin{equation}
\label{m}
\xymatrix{ 
 0 \ar[r] &
V \ar[r]^{\iota} & 
{\frak{g}} \ar[r]^{\pi} &
{\frak{g}_0}\ar[r] &
 0 }
\end{equation}
The natural number $\dim V$
is referred to as
{\bf the dimension of the central extension.}
It is well known that the isomorphism class of the Lie algebra $\g$
is uniquely determined by the cohomology class $[\theta]$ of the $2$-cocycle
(see \cite{Che}). 
Now, the short exact sequence \eqref{m} {\bf splits}
if there exists a Lie algebra morphism, $\sigma:\g_0\to\g$
such that, $\pi \circ \sigma=\operatorname{Id}_{{\g}_0}$.
Equivalently, it splits if and only if there 
is a linear map $\tau:\g_0 \rightarrow V$, 
such that, $\theta(x,y)=\tau([x,y]_0)$, for all $x,y \in \g_0$.

\medskip
\noindent
Let $\g$ be a Lie algebra with Lie bracket $[\,\cdot\,,\,\cdot\,]$.
Let $\Der \g\subset \End(\g)$ be the subspace of
{\bf derivations of} $\g$:
$$
\Der\g = 
\{\,D\in\End(\g)\,\mid\, D([x,y])=[D(x),y]+[x,D(y)];\ x,y \in \g\,\}.
$$
This subspace is a Lie subalgebra
of $\End(\g)$ under the commutator of linear operators.
If $\g$ is quadratic with invariant metric $B$,
we shall say that a linear map $D \in \End(\g)$ {\bf is $B$-skew-symmetric} if it satisfies
$B\left(D(x),y\right)=-B\left(x,D(y)\right)$, for all $x$ and $y$ in $\g$. 
The vector subspace of $\End(\g)$ consisting of 
$B$-skew-symmetric linear maps is denoted by 
${\frak o}_{B}(\g)$; it is  also a Lie subalgebra of $\End(\g)$.
\smallskip

The following proposition, whose proof is straightforward,
makes it evident that there is a strong relationship
between $B$-skew-sym\-metric derivations
and $2$-cocycles:

\medskip
\begin{Prop}\label{proposicion 1}{\sl
Let $\g_0$ be a quadratic Lie algebra with Lie bracket $[\,\cdot\,,\,\cdot\,]_0$
and invariant metric $B_0$.
Let $V$ be an $r$-dimensional vector space. Each basis $\{v_1,\ldots,v_r\}$ of $V$
provides an isomorphism between the space of $2$-cocycles $Z^{2}(\g_0,V)$
and the space of $r$-tuples $(D_1,\ldots, D_r)$ of $B_0$-skew-symmetric derivations 
of $\g_0$, via,
$$
\qquad\theta(x,y)=
\sum_{i=1}^r B_0\left(D_i(x),y\right)\,v_i,\quad\text{for all\ }x,y\in\g_0.
$$
Moreover, a $2$-cocycle $\theta\in Z^{2}(\g_0,V)$ is a $2$-coboundary if and only if 
there are $a_i\in \g_0$ such that
$D_i=\ad_0(a_i)=[\,a_i\,,\,\cdot\,]_0$ (\,$1\le i\le r$).
}
\end{Prop}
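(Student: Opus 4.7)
Fix the basis $\{v_1,\ldots,v_r\}$ and decompose any skew-symmetric bilinear map $\theta:\g_0\times\g_0\to V$ uniquely as $\theta(x,y)=\sum_{i=1}^{r}\theta_i(x,y)\,v_i$ with each scalar form $\theta_i:\g_0\times\g_0\to\F$ skew-symmetric. Because $B_0$ is non-degenerate on a finite-dimensional space, the assignment $D\mapsto B_0(D(\cdot),\cdot)$ is a linear isomorphism between $\End(\g_0)$ and the bilinear forms on $\g_0$; it thus provides a unique $D_i\in\End(\g_0)$ such that $\theta_i(x,y)=B_0(D_i(x),y)$. The skew-symmetry of $\theta_i$ combined with the symmetry of $B_0$ is precisely the $B_0$-skew-symmetry of $D_i$.

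The core of the argument is to show that the cocycle identity for $\theta$ is equivalent to each $D_i$ being a derivation. I would expand the three cyclic terms $\theta_i(x,[y,z]_0)$, $\theta_i(y,[z,x]_0)$, $\theta_i(z,[x,y]_0)$ using invariance of $B_0$ together with the already established $B_0$-skew-symmetry of $D_i$, carefully moving every factor of $x$ into a single slot of $B_0$. This collects the sum into the single expression $B_0\bigl(x,\,D_i([y,z]_0)-[D_i(y),z]_0-[y,D_i(z)]_0\bigr)$. Non-degeneracy of $B_0$ together with the arbitrariness of $x,y,z$ then makes the cocycle identity equivalent to the Leibniz rule for $D_i$. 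Combined with the preceding paragraph, this yields the claimed bijection between $Z^2(\g_0,V)$ and $r$-tuples of $B_0$-skew-symmetric derivations.

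For the coboundary statement, write any linear $\tau:\g_0\to V$ in components as $\tau(x)=\sum_i\tau_i(x)\,v_i$ with $\tau_i\in\g_0^{\ast}$. By non-degeneracy of $B_0$ there are unique $a_i\in\g_0$ with $\tau_i(\cdot)=B_0(a_i,\cdot)$. Invariance of $B_0$ then gives $\tau_i([x,y]_0)=B_0(a_i,[x,y]_0)=B_0([a_i,x]_0,y)$, so writing $\theta(x,y)=\tau([x,y]_0)$ in components produces exactly $D_i=\ad_0(a_i)$. The converse is obtained by reading the same identities backwards and defining $\tau$ from a chosen family $a_i$.

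The only genuine obstacle is the bookkeeping in paragraph two: one must apply invariance and $B_0$-skew-symmetry in the correct order so that the ``test vector'' $x$ can be isolated and non-degeneracy of $B_0$ invoked. Conceptually the statement is a dressing up of two independent facts, the $B_0$-duality $\g_0\simeq\g_0^{\ast}$ and the standard dictionary between $2$-cocycles and derivation-like data on a Lie algebra, which the hypotheses on a quadratic Lie algebra allow us to glue together.
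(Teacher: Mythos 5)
Your argument is correct: the paper itself omits the proof, declaring it straightforward, and your computation (componentwise $B_0$-duality, the cyclic sum collapsing via invariance and $B_0$-skew-symmetry to $B_0\bigl(x,\,D_i([y,z]_0)-[D_i(y),z]_0-[y,D_i(z)]_0\bigr)$ up to an overall sign, and the coboundary dictionary $\tau_i=B_0(a_i,\cdot)\leftrightarrow D_i=\ad_0(a_i)$) is exactly the standard argument the authors intend. No gaps.
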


\medskip
{\bf Hypotheses for the remaining part of this work.}
We shall assume that $V$ is an $r$-dimensional vector space
and that $\g=\g_0\oplus V$ is a central extension of $\g_0$
associated to a given $2$-cocycle $\theta\in Z^2(\g_0,V)$.
We shall assume as needed that a basis $\{v_1,\ldots,v_r\}$ 
of $V$ is given establishing the correspondence,
$$
Z^2(\g_0,V)\ni\theta
\ \leftrightarrow\  
(D_1,\ldots, D_r),
\quad 
D_i\in\Der\g_0\cap
{\frak o}_{B_0}(\g_0),
\ 1\le i\le r.
$$
We shall assume that the Lie bracket  $[\,\cdot\,,\,\cdot\,]$
in $\g$ is given by \eqref{tc1} and that
$\g_0$ and $\g$ are both quadratic with invariant metrics 
$B_0:\g_0\times\g_0\to\F$ and $B:\g\times\g\to\F$,
respectively. Let $\iota:\g_0\hookrightarrow\g$ and $\pi:\g\rightarrow\g_0$ be the inclusion
and projection maps in \eqref{m}. 
Define the linear maps,
\begin{equation}
\label{maps}
h=B_0^\sharp\circ\,\iota^*\,\circ\,B^\flat :\g\to\g_0,
\quad\text{and}\quad
k = B^\sharp\circ\,\pi^*\,\circ\,B_0^\flat:\g_0\to\g,
\end{equation}
with, $\iota^*\!:\g^*\!\to \g_0^*$ ($\iota^*(\xi)=\xi\circ\iota,\ \,\xi\in\g^*$)
and $\pi^*\!:\g_0^*\!\to \g^*$ ($\pi^*(\chi)=\chi\circ\pi,\ \,\chi\in\g_0^*$).
Observe that no {\it a priori\/} relationship between $B_0$ and $B$ is assumed
({\it eg\/,} $B_0$ is not necessarily the restriction of $B$ to $\g_0$)
and therefore $\g_0$ is neither a subalgebra nor an ideal of $\g$.
Nevertheless, the linear maps $h:\g \to \g_0$ and $k:\g_0 \to \g$, 
help each other to determine $B$ in terms of $B_0$ and viceversa,
as the following result from \cite{GSV} states:

\medskip
\begin{Lemma}\label{lema 1}{\sl
Under the stated hypotheses,
the linear maps $h:\g\to\g_0$ and $k:\g_0\to\g$ defined in \eqref{maps}, 
satisfy the following properties:
\begin{itemize}

\item[(i)] $h \circ k=\operatorname{Id}_{{\g}_0}$.

\item[(ii)] $B(x+v,y)=B_0(h(x+v),y)$ and 
$B_0(x,y)=B(k(x),y+v)$ for all $x,y \in \g_0$ and $v \in V$.

\item[(iii)] $\Ker(h)={\g_0}^{\perp}$ and\ \ $\Im(k)=V^{\perp}$.

\item[(iv)] $\g=\Ker(h) \oplus \Im(k)$.

\item[(v)] $k([x,y]_0)=[k(x),y]$, for all $x,y \in \g_0$.

\end{itemize}}
\end{Lemma}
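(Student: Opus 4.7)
The plan is to show that all five assertions follow transparently from the adjoint-style identities that characterize $h$ and $k$. I would begin by deriving item (ii), since it is essentially a rewriting of the definitions: unwinding $h=B_0^\sharp\circ\iota^*\circ B^\flat$ yields $B_0(h(z),y)=B(z,\iota(y))$ for every $z\in\g$, $y\in\g_0$, and analogously $B(k(x),z')=B_0(x,\pi(z'))$ for every $x\in\g_0$, $z'\in\g$. The two equalities in (ii) are specializations of these to $z=x+v$ and $z'=y+v$, using $\pi(y+v)=y$.

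Items (i), (iii) and (iv) then drop out quickly. For (i), $B_0(h(k(x)),y)=B(k(x),y)=B_0(x,y)$ together with non-degeneracy of $B_0$ gives $h\circ k=\Id_{\g_0}$. For (iii), $h(z)=0$ is equivalent by non-degeneracy of $B_0$ to $B(z,y)=0$ for every $y\in\g_0$, which is by definition $z\in\g_0^\perp$; meanwhile $B(k(x),v)=B_0(x,\pi(v))=0$ for $v\in V$ gives $\Im(k)\subset V^\perp$, and a dimension count---$\dim\Im(k)=\dim\g_0$ by injectivity of $k$ (from (i)) and $\dim V^\perp=\dim\g-\dim V=\dim\g_0$ by non-degeneracy of $B$---promotes this to equality. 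For (iv), $\Ker(h)\cap\Im(k)=0$ again follows from $h\circ k=\Id_{\g_0}$, and $\dim\Ker(h)+\dim\Im(k)=\dim V+\dim\g_0=\dim\g$ by surjectivity of $h$.

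The only substantive item is (v). Here I would test both sides against an arbitrary $z\in\g$ using the non-degenerate pairing $B$, and chain together: the adjoint identity for $k$ from (ii); invariance of $B_0$; the fact that $\pi$ is a Lie algebra morphism (since $V$ is central in $\g$, one has $\pi([y,z])=[y,\pi(z)]_0$); the adjoint identity for $k$ once more; and finally invariance of $B$:
\begin{align*}
B(k([x,y]_0),z)
&= B_0([x,y]_0,\pi(z)) = B_0(x,[y,\pi(z)]_0) \\
&= B_0(x,\pi([y,z])) = B(k(x),[y,z]) = B([k(x),y],z).
\end{align*}
Non-degeneracy of $B$ then forces $k([x,y]_0)=[k(x),y]$. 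I expect (v) to be the only step calling for genuine care; the likely pitfall is confusing $y\in\g_0$ with its image $\iota(y)\in\g$ (as it appears both as an argument of $[\,\cdot\,,\,\cdot\,]_0$ and of $[\,\cdot\,,\,\cdot\,]$), which is exactly why packaging (ii) as an adjoint identity and invoking $\pi$ as a Lie morphism keeps the bookkeeping straightforward.
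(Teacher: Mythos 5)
Your proof is correct: each of the five identities is verified completely, and the key steps (the adjoint identities $B_0(h(z),y)=B(z,y)$ and $B(k(x),z)=B_0(x,\pi(z))$ obtained by unwinding \eqref{maps}, the dimension counts for (iii)--(iv), and for (v) the chain using invariance of $B_0$ and $B$, the fact that $\pi$ is a Lie morphism because $V$ is central, and non-degeneracy of $B$) are all sound. Note that the paper itself does not prove this lemma but quotes it from \cite{GSV}, so there is no in-paper argument to compare against; your derivation is exactly the kind of direct verification the cited reference carries out, and it stands on its own.
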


Let $\pi_V:\g \to V$ be the linear projection onto $V$.
Set $\alpha_0=\pi \circ k:\g_0 \to \g_0$ 
and $R=\pi_{V} \circ k:\g_0 \to V$. 
Then $k(x)=\alpha_0(x)+R(x)$,
for all $x \in \g_0$. 
Having fixed a basis $\{v_1,\ldots,v_r\}$ of $V$, each $x \in \g_0$
gives rise to $r$ scalars $\alpha_\ell(x)\in \F$ 
($1\le \ell\le r$), such that $R(x)=\alpha_1(x)v_1+ \cdots +\alpha_r(x)v_r$.
Clearly, each assignment $x\mapsto\alpha_{\ell}(x)$ is linear.
Since $B_0$ is non-degenerate, for each $\alpha_\ell\in\g^*_0$,
there exists an element $a_{\ell} \in \g_0$, satisfying 
$\alpha_{\ell}(x)=B_0(a_{\ell},x)$,
for any $x \in \g_0$ ($1 \leq \ell \leq r$). 
Thus,
$$
R(x)=\sum_{i=1}^rB_0(a_i,x)\,v_i,\quad \text{for all\ }\,x \in \g_0,
$$ 
and,
\begin{equation}\label{k}
k(x)=\alpha_0(x)+\sum_{i=1}^rB_0(a_i,x)\,v_i,\quad \text{for all\ }\,x \in \g_0.
\end{equation}
Of prime 
motivation for this work
was to realize that
the linear map $h:\g \to \g_0$ gives rise to a 
linear map
$\rho:\g \to \Der(\g_0) \cap
{\frak o}_{B_0}(\g_0)
\subset\End(\g_0)$,
defined by,
\begin{equation}
\label{definicion rho}
\rho(x)(y)=h([x,y]), \quad \text{for all\ }\,x \in \g\ \ \text{and}\ \ y \in \g_0.
\end{equation}
It is shown in {\bf Lemma \ref{lema 2}.(iv)} below
that $\rho:\g\to\End(\g_0)$ behaves like a representation of $\g$ in $\g_0$. 
The fact that 
$\operatorname{Im}(\rho)\subset \Der(\g_0) \cap {\frak o}_{B_0}(\g_0)$
follows from item (ii) in {\bf Lemma \ref{lema 1}} and the
fact that $B$ and $B_0$ are non-degenerate.
But the important point for us is that $\rho$ somehow intertwines
the geometry and the algebra between $\g$ and $\g_0$. 
In order to explain how, we first quote the following result from \cite{GSV}
(see {\bf Lemma 2.2} therein)
that recalls some of the properties of $\rho$.

\medskip
\begin{Lemma}\label{lema 2}{\sl 
Under the stated hypotheses,
$\alpha_0=\pi\circ k\in \Gamma_{B_0}(\g_0)$
and there is a basis $\{a_i+w_i \mid\,a_i \in \g,\,w_i \in
V,\,\,1 \leq i \leq r \}$ of $(\g_0)^{\perp}$, such that:

\begin{itemize}

\item[(i)] $B(a_i+w_i,v_j)=\delta_{ij}$.
\smallskip

\item[(ii)] $\Ker(\alpha_0) \subseteq C(\g_0)$.
\smallskip

\item[(iii)] $ h([x,[y,z]])=[x,h([y,z])]_0$, 
for all $x \in \g_0$ and for all $y,z \in \g$.
\smallskip

\item[(iv)] $\rho([x,y])=\rho(x) \circ \alpha_0 \circ \rho(y)-\rho(y) \circ \alpha_0 \circ \rho(x)$,
for all $x,y \in \g_0$ and $\rho(a_i)=D_i$ for all $1 \leq i \leq r$.
\smallskip

\item[(v)] $\Ker(D_i)=\{x \in \g\,\mid\,[a_i,x]=0\}$,
for all $1 \leq i \leq r$.
\smallskip

\item[(vi)] $x=\alpha_0\circ h(x+v)+\overset{r}{\underset{i=1}{\sum}}B(x+v,v_i)a_i$, \ 
for all $x \in \g_0$ and $v \in V$.
\smallskip

\item[(vii)] $\alpha_0 \circ \rho(x)=\rho(x) \circ \alpha_0=\ad(x)$,
for all $x \in \g_0$. 
\newline
In particular, $\alpha_0 \circ D_i=D_i \circ \alpha_0=\ad(a_i)$, for all $1 \leq i \leq r$.

\end{itemize}
}
\end{Lemma}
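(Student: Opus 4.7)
The plan is to exploit the two duality identities
\[
B(w,y) = B_0(h(w),y)\ \ (w\in\g,\,y\in\g_0), \qquad B(k(x),z) = B_0(x,\pi(z))\ \ (x\in\g_0,\,z\in\g)
\]
---the first is Lemma \ref{lema 1}(ii) and the second follows immediately from $k=B^\sharp\circ\pi^*\circ B_0^\flat$---together with the commutation rule $k([x,y]_0)=[k(x),y]$ of Lemma \ref{lema 1}(v) and its reformulation Lemma \ref{lema 1}(vi), namely $T\circ h(w) = \pi(w)-\sum_i B(w,v_i)\,a_i$. The pervasive geometric input is that $V$ is central in $\g$, which forces $B([u,w],v)=B(u,[w,v])=0$ for every $v\in V$.

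First I would handle part (i) and $T\in\Gamma_{B_0}(\g_0)$. Since $\dim(\g_0)^\perp = \dim V = r$ and $(\g_0)^\perp\cap V^\perp = \g^\perp = 0$, the form $B$ induces a non-degenerate pairing $(\g_0)^\perp\times V\to\F$; I take the dual basis $\{b_i\}$ of $(\g_0)^\perp$ satisfying $B(b_i,v_j)=\delta_{ij}$ and decompose $b_i=a'_i+w_i$ with $a'_i\in\g_0$, $w_i\in V$. Computing $B(k(x),b_j)$ in two ways---the second duality gives $B_0(x,a'_j)$, while expanding $k(x)=T(x)+\sum_i B_0(a_i,x)v_i$ with $T(x)\in\g_0$ orthogonal to $b_j\in(\g_0)^\perp$ gives $B_0(a_j,x)$---forces $a'_j=a_j$, identifying the $\g_0$-components with those of \eqref{k}. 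The $B_0$-symmetry of $T$ follows immediately from
\[
B_0(T(x),z) = B(k(z),k(x)) = B(k(x),k(z)) = B_0(x,T(z)),
\]
and the centroid property $T\circ\ad_0(y)=\ad_0(T(y))$ from the $\g_0$-projection of Lemma \ref{lema 1}(v).

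Next I would prove (vii), which is the engine for the remaining items. Applying Lemma \ref{lema 1}(vi) to $w=[x,y]$ with $x,y\in\g_0$, the vanishing $B([x,y],v_i)=B(x,[y,v_i])=0$ yields
\[
T\circ\rho(x)(y) = T(h([x,y])) = \pi([x,y]) = [x,y]_0 = \ad(x)(y).
\]
For $\rho(x)\circ T = \ad(x)$ the computation is more delicate: expanding $B_0(\rho(x)T(y),z) = B(x,[T(y),z])$ for $z\in\g_0$ and combining the centroid identity $[T(y),z]_0=T([y,z]_0)$, the identity $D_i\circ T=\ad_0(a_i)$ (extracted from the $V$-projection of Lemma \ref{lema 1}(v)), and the formula $T\circ h(x)=x-\sum_i B(x,v_i)a_i$, the correction stream $-\sum_i B(x,v_i)B_0([a_i,y]_0,z)$ from $T\circ h$ on the $\g_0$-part and the stream $+\sum_i B(x,v_i)B_0([a_i,y]_0,z)$ from the cocycle $V$-part cancel exactly, leaving $B_0([x,y]_0,z)$. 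The identification $\rho(a_i)=D_i$ follows from the invariance chain
\[
B_0(h([b_i,y]),z) = B([b_i,y],z) = B(b_i,[y,z]) = B(b_i,\theta(y,z)) = B_0(D_i(y),z),
\]
valid for $y,z\in\g_0$ because $b_i\in\Ker h$ kills the $\g_0$-part of $[y,z]$ and the dual basis extracts $D_i$ from the $V$-part.

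Granted (vii), the rest is short: (iv) is the Jacobi identity in $\g$ applied to $[[x,y],z]$ then transported through $h$ via (iii) and (vii); (ii) follows because $T(x)=0$ places $k(x)$ in $V$, so $k([x,y]_0)=[k(x),y]=0$ by centrality and $[x,y]_0=0$; (iii) uses $V$-centrality to rewrite $[x,[y,z]]$ as $[x,\pi([y,z])]$ and then applies Lemma \ref{lema 1}(vi); (v) restates $\Ker D_i = \Ker\rho(a_i)$ using $[b_i,\cdot]=[a_i,\cdot]$; (vi) rewrites the decomposition $\g=\Ker h\oplus\Im k$ of Lemma \ref{lema 1}(iv) in the dual basis $\{b_i\}$. \textbf{The hardest step} is the second half of (vii): the explicit cancellation of the two correction streams from Lemma \ref{lema 1}(vi) against the cocycle's $V$-contribution is the delicate computation where the centroid property of $T$, the $B_0$-invariance, and the defining data of the central extension all conspire to pin down the identification.
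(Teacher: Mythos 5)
There is nothing in this paper to compare your argument against: the authors do not prove Lemma \ref{lema 2} here, they quote it verbatim from \cite{GSV} (Lemma 2.2 therein). So your proposal is effectively supplying a proof, and on its merits it is correct. The key identifications all check out: the perfect pairing $(\g_0)^\perp\times V\to\F$ (since $(\g_0)^\perp\cap V^\perp=\g^\perp=\{0\}$) produces the dual basis $b_i=a_i+w_i$, and your two-sided computation of $B(k(x),b_j)$ correctly pins the $\g_0$-components down to the $a_i$ of \eqref{k}; the $B_0$-symmetry of $T$ via $B(k(x),k(z))$ and the centroid property from the $\g_0$-projection of Lemma \ref{lema 1}.(v) are right; the $V$-projection of the same identity does give $D_i\circ T=\ad_0(a_i)$; and the cancellation of the two correction streams in the second half of (vii), which you rightly flag as the hardest step, works exactly as you describe, as does the chain $B_0(h([b_i,y]),z)=B_0(D_i(y),z)$ giving $\rho(a_i)=D_i$.

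Two presentational repairs are needed. First, the formula $T\circ h(w)=\pi(w)-\sum_i B(w,v_i)\,a_i$ that you cite as ``Lemma \ref{lema 1}(vi)'' is not in Lemma \ref{lema 1} (which has only items (i)--(v)); it is item (vi) of the lemma you are proving. Since you do derive it independently from the decomposition $\g=\Ker(h)\oplus\Im(k)$ and from item (i), there is no circularity, but you should prove (vi) before (vii), not after. Second, your one-line treatment of (v) (``restates $\Ker D_i=\Ker\rho(a_i)$'') only gives $D_i(x)=0\iff h([a_i,x])=0$; to conclude $[a_i,x]=0$ you still need the observation that $[\g,\g]\subseteq V^{\perp}=\Im(k)$ (centrality of $V$ plus invariance of $B$) together with $\Ker(h)\cap\Im(k)=\{0\}$ from Lemma \ref{lema 1}.(iii)--(iv). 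Both points are easily fixed with ingredients you already have, so I regard them as gaps in exposition rather than in the argument.
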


Since most of our results deal with the case when $\g_0$
is a non-Abelian
Lie algebra, we single out the following result.

\medskip
\begin{Cor}\label{corolario zero}
{\sl Let $\alpha_0=\pi\circ k:\g_0 \to \g_0$
be the linear map defined in {\bf Lemma \ref{lema 2}}.
Under the stated hypotheses in this section,
$\operatorname{Im}(\alpha_0)=0$ implies that $\g_0$ is Abelian\/.}
\end{Cor}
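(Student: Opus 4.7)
The plan is to obtain the corollary as an immediate consequence of item (vii) of Lemma \ref{lema 2}, which asserts that $T\circ\rho(x)=\rho(x)\circ T=\ad(x)$ for every $x\in\g_0$. I first want to clarify the interpretation of that identity: since $\rho$ takes values in $\End(\g_0)$ and $T\in\End(\g_0)$, the compositions $T\circ\rho(x)$ and $\rho(x)\circ T$ are $\g_0$-valued, so the symbol $\ad(x)$ on the right must be read as the bracket in $\g_0$, namely $y\mapsto[x,y]_0$ (equivalently $\pi\circ\ad(x)$, since $[x,y]=[x,y]_0+\theta(x,y)$ and the $V$-component is forced to vanish by the equality).

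With this reading, the proof is a one-liner. The hypothesis $\operatorname{Im}(T)=0$ means that $T$ is the zero endomorphism of $\g_0$. Substituting into Lemma \ref{lema 2}(vii) gives $\ad(x)=T\circ\rho(x)=0$ for every $x\in\g_0$, i.e.\ $[x,y]_0=0$ for all $x,y\in\g_0$; this is exactly the assertion that $\g_0$ is Abelian.

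A second, self-contained route which I would mention (or use as a sanity check) bypasses Lemma \ref{lema 2} entirely and relies only on Lemma \ref{lema 1}. If $T=\pi\circ k$ is identically zero, then $k(x)\in\Ker(\pi)=V$ for every $x\in\g_0$; but $V$ is central in $\g$, hence $[k(x),y]=0$ for every $y\in\g$. Combining this with the intertwining identity of Lemma \ref{lema 1}(v) yields $k([x,y]_0)=[k(x),y]=0$, and the injectivity of $k$ given by $h\circ k=\operatorname{Id}_{\g_0}$ in Lemma \ref{lema 1}(i) forces $[x,y]_0=0$.

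There is no real obstacle in either approach; the content of the corollary is entirely absorbed into Lemma \ref{lema 2}(vii), or alternatively into the pair formed by Lemma \ref{lema 1}(v) and the centrality of $V$. The only point that deserves a brief comment in the write-up is the interpretation of $\ad(x)$ in item (vii) when $x\in\g_0$, which is why I would spend one sentence on it before invoking the identity.
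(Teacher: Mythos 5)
Your proposal is correct, and your ``sanity check'' route is in fact the paper's own proof: the authors note that $V\subset C(\g)$ gives $[k(x),y]=[T(x),y]$, so $T=0$ forces $k([x,y]_0)=[k(x),y]=0$ by Lemma \ref{lema 1}.(v), and injectivity of $k$ from Lemma \ref{lema 1}.(i) yields $[x,y]_0=0$. Your primary route through Lemma \ref{lema 2}.(vii) is a legitimate, slightly shorter variant: the identity $T\circ\rho(x)=\ad(x)$ must indeed be read with $\ad(x)=\ad_0(x)$ acting on $\g_0$ (as you verify, $T(h([x,y]))=\pi(k(h([x,y])))=\pi([x,y])=[x,y]_0$), and then $T=0$ kills $\ad_0(x)$ for every $x\in\g_0$. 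Since Lemma \ref{lema 2}.(vii) is itself proved from the same ingredients (Lemma \ref{lema 1}.(i),(v) and the centrality of $V$), the two arguments have the same mathematical content; your one-sentence clarification of how $\ad(x)$ is to be interpreted in item (vii) is worth keeping, as the paper is not explicit on that point.
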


\medskip
\begin{proof}
Since $V \subset C(\g)$, then $[k(x),y]=[\alpha_0(x),y]$, 
for all $x \in \g_0$ and for all $y \in \g$. 
If $\alpha_0=0$, {\bf Lemma \ref{lema 1}.(v)} implies that
$k([x,y]_0)=[k(x),y]=[\alpha_0(x),y]=0$, for all $x,y \in \g_0$. 
Since $k$ is injective ({\bf Lemma \ref{lema 1}.(i)}), 
$[x,y]_0=0$ for all $x,y \in \g_0$.
\end{proof}

\medskip
\section{Hom-Lie algebras defined on $\g=\g_0\oplus V$}

\medskip
\begin{Prop}\label{Prop 3}{\sl
Under the stated hypotheses in \S1 above,
there is a skew-symmetric, 
a bilinear product, 
$\mu:\g \times \g \to \g$,
and non-zero linear map, $\alpha:\g\to\g$  
such that:
\begin{itemize}
\item[(i)] $(\g,\mu,\alpha)$ is a Hom-Lie algebra, 
such that $\mu(\g,\g) \subset \g_0$, and $\mu(V,\g_0)=\{0\}$.
\end{itemize}
In particular,
$\mu(u,v)=0$, for all $u$ and $v$ in $V$, 
whereas the restriction 
$\mu_0=\mu\vert_{\g_0\times\g_0}:\g_0\times\g_0\to\g_0$
yields a skew-symmetric, 
bilinear product on $\g_0$,
such that,
if $\alpha_0=\pi\circ\alpha\vert_{\g_0}:\g_0\to\g_0$,
\begin{itemize}
\item[(ii)] 
the triple
$(\g_0,\mu_0,\alpha_0)$
is a Hom-Lie algebra, and the invariant metric $B_0$ of
the Lie algebra $\g_0$, satisfies, 
$$
\aligned
B_0(\mu_0(x,y),z) & = B_0(x,\mu_0(y,z)),\quad\text{and}\quad
\\
B_0\left(\alpha_0(x),y\right) & =B_0\left(x,\alpha_0(y)\right),
\quad\text{for all\ }x,y,z\in\g_0.
\endaligned
$$
\end{itemize}
\begin{itemize}
\item[(iii)] 
The triple $(V,\mu_V,\alpha_V)$,
with $\mu_V=\mu\vert_{V\times V}$ and $\alpha_V=\alpha\vert_V$,
is trivially a Hom-Lie algebra
and the triple
$(\g_0,\mu_0,\alpha_0)$
is both, a Hom-Lie ideal and a Hom-Lie direct summand of 
$(\g,\mu,\alpha)$; that is,
$$
(\g,\mu,\alpha) = (\g_0,\mu_0,\alpha_0)\oplus (V,\mu_V,\alpha_V).
$$
\item[(iv)] For each $1\le i\le r$, there is an element $a_i\in\g_0$,
such that, $D_i(x)=\mu(a_i,x)$ for all $x \in \g_0$.
\end{itemize}
}
\end{Prop}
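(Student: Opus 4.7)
The plan is to take the definitions of $\alpha$ and $\mu$ essentially forced by parts (ii) and (iv) of the statement: set
\[
\alpha(x+v):=T(x)+v,\qquad \mu(x,y):=h([x,y]),
\]
for $x,y\in\g$ and $v\in V$, where $[\,\cdot\,,\,\cdot\,]$ denotes the Lie bracket of $\g$. Indeed, (ii) dictates $\alpha\vert_V=\operatorname{Id}_V$ and $\alpha_0=\pi\circ\alpha\vert_{\g_0}=T$, while (iv) combined with $\rho(a_i)=D_i$ from {\bf Lemma \ref{lema 2}.(iv)} forces $\mu(a_i,x)=h([a_i,x])$. Skew-symmetry of $\mu$ is inherited from the bracket, $\mu(\g,\g)\subseteq\g_0$ follows from $\operatorname{Im}(h)\subseteq\g_0$, and $\mu(V,\g)=0$ because $V\subset C(\g)$. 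These observations immediately yield (iv) and, together with the obvious $\alpha$-invariance of $\g_0$ and of $V$, also (iii).

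The substantive step is the twisted Jacobi identity $\sum_{\circlearrowleft}\mu(\alpha(x),\mu(y,z))=0$. A case analysis on whether each of $x,y,z$ lies in $\g_0$ or in $V$ shows that the sum is automatically zero whenever any argument lies in $V$, so the only nontrivial case is $x,y,z\in\g_0$. For this case, {\bf Lemma \ref{lema 2}.(vii)} in the form $\rho(w)\circ T=\operatorname{ad}(w)$ reads $h([w,T(x)])=[w,x]_0$ for $w,x\in\g_0$; taking $w=h([y,z])$ and applying skew-symmetry of the bracket yields
\[
\mu(\alpha(x),\mu(y,z)) = h([T(x),h([y,z])]) = -h([h([y,z]),T(x)]) = [x,h([y,z])]_0.
\]
Then {\bf Lemma \ref{lema 2}.(iii)} identifies $[x,h([y,z])]_0$ with $h([x,[y,z]])$, and the cyclic sum collapses to $h\bigl(\sum_{\circlearrowleft}[x,[y,z]]\bigr)=0$ by the Jacobi identity in $\g$.

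The invariance statements in (ii) are routine consequences of {\bf Lemmas \ref{lema 1}} and \ref{lema 2}: the symmetry $B_0(\alpha_0(x),y)=B_0(x,\alpha_0(y))$ is exactly $T\in\Gamma_{B_0}(\g_0)$, and the invariance of $\mu_0$ follows from
\[
B_0(\mu_0(x,y),z)=B([x,y],z)=B(x,[y,z])=B_0(x,\mu_0(y,z)),
\]
using {\bf Lemma \ref{lema 1}.(ii)} on both ends and the $B$-invariance of the bracket on $\g$. The point requiring care is the correct reading of {\bf Lemma \ref{lema 2}.(iii)}, whose right-hand side must be interpreted as the bracket in $\g_0$ for the Jacobi computation to close; once that convention is in place, no further obstacle appears, and the remaining assertions (including the explicit description $\alpha(x+v)=T(x)+v$, the vanishing of $\mu_V$, and the decomposition into Hom-Lie direct summands) are immediate from the definitions.
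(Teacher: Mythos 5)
Your proposal is correct and follows essentially the same path as the paper: define $\mu(x,y)=h([x,y])$ and $\alpha(x+v)=T(x)+v$, reduce the twisted Jacobi identity to the Jacobi identity of $\g$ via Lemma \ref{lema 2}.(iii) (with the $[\,\cdot\,,\,\cdot\,]_0$ reading you flag, which is also how the paper uses it), and obtain (ii)--(iv) from Lemmas \ref{lema 1} and \ref{lema 2}. The only cosmetic difference is that you pass through Lemma \ref{lema 2}.(vii) where the paper argues directly with Lemma \ref{lema 1}.(v) and $h\circ k=\operatorname{Id}_{\g_0}$, which amounts to the same computation.
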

\begin{proof}
{\bf (i)} Use the fact that
$V$ is contained in the center $C(\g)$ of $\g$
and the fact that $[\g,\g]\subset V^\perp=\operatorname{Im}(k)$
({\bf Lemma \ref{lema 1}.(iii)}) to conclude that 
$B([x,y],v)=0$ for all $x,y\in\g$ and all $v\in V$.
Since $k$ is injective ({\bf Lemma \ref{lema 1}.(i)}) it follows that
for any $x$ and $y$ in $\g$,
there exists a unique 
$\mu(x,y) \in \g_0$, 
such that $[x,y]=k(\mu(x,y))$.
Therefore, $h([x,y])=\mu(x,y)$ and we
obtain the following skew-symmetric bilinear map on $\g$:
\begin{equation}\label{hom lie producto}
\begin{array}{rccl}
\mu:&\g \times \g & \longrightarrow &\g\\
&(x,y)& \mapsto & h([x,y]),
\end{array}
\end{equation}
Let $\alpha:\g \to \g$ be the linear map defined by
$$
\alpha(x+v)=\alpha_0(x)+v,\quad \text{for all}\ x \in \g_0\ \text{and for all}\ v \in V.
$$
Since $V \subset C(\g)$, {\bf Lemma \ref{lema 2}.(iii)} implies that
for any $x \in \g_0$, $v \in V$ and $y,z \in \g$,
$$
\aligned
\,\mu(\alpha(x+v),\mu(y,z))&
=h([\alpha_0(x),\mu(y,z)])
\\
\,&=h([k(x),\mu(y,z)])=h\circ k([x,\mu(y,z)]_0)\\
\,&=[x,\mu(y,z)]_0=[x,h([y,z])]_0\\
\,&=h([x,[y,z]]).
\endaligned
$$
The Jacobi identity for the Lie bracket $[\,\cdot\,,\,\cdot\,]$ of $\g$,
implies that for any $x$, $y$ and $z$ in $\g$,
$$
\mu(\alpha(x),\mu(y,z))+\mu(\alpha(y),\mu(z,x))
+\mu(\alpha(z),\mu(x,y))=0.
$$
This proves that $(\g,\mu,\alpha)$ is a Hom-Lie algebra. 
Observe that $\mu(\g,\g) \subseteq \g_0$, because $\Im(h)=\g_0$.
Finally, observe that $V \subset C(\g)$ implies $\mu(V,\g)=\{0\}$.

\medskip
{\bf (ii)} 
Let $\mu_0=\mu\vert_{\g_0\times\g_0}:\g_0\times\g_0\to\g_0$
and $\alpha_0=\pi\circ\alpha\vert_{\g_0}:\g_0\to\g_0$ be as in the statement. 
Since $\alpha(x)-k(x)\in V$ for any $x\in\g_0$ and $V \subset C(\g)$, 
it follows that,
$$
\mu(\alpha(x),\mu(y,z))=\mu_0(\alpha_0(x),\mu_0(y,z)),\quad
\text{for all\ }\,x,y,z \in \g_0.
$$ 
Therefore $(\g_0,\mu_0,\alpha_0)$ is a Hom-Lie algebra. 
Now, let $B_0$ be the given invariant metric for the Lie algebra $\g_0$
and let $B$ be the invariant metric on the Lie algebra $\g$.
Since $\alpha_0$ is $B_0$-symmetric, 
we may use the invariance of $B$ 
with respect to the Lie bracket $[\,\cdot\,,\,\cdot\,]$ defined in \eqref{tc1}
and {\bf Lemma \ref{lema 1}.(ii)}, to deduce that,
$$
\aligned
B_0(\mu_0(x,y),z)
& = B_0\left(h([x,y]),z\right) = B([x,y],z)
\\
& = B(x,[y,z]) = B_0(x,h([y,z]))
= B_0\left(x,\mu_0(y,z)\right),
\endaligned
$$
thus proving {\bf (ii)}.

\medskip
{\bf (iii)} The statement follows from $\mu(\g,\g) \subset \g_0$,
$\alpha_0=\alpha\vert_{\g_0}$,
and $\alpha_V=\operatorname{Id}_V$.

\medskip
{\bf (iv)} The statement follows from
\eqref{definicion rho} and {\bf Lemma \ref{lema 2}.(iv)}.
\end{proof}
\medskip

\begin{Remark}{\rm
Since the skew-symmetric bilinear map $\mu:\g \times \g \to \g$
just defined in \eqref{hom lie producto} satisfies $\mu(\g,\g)\subset \g_0$,
we might as well use the same symbol $\mu$ to denote its
restriction $\mu_0=\mu\vert_{\g_0 \times \g_0}$ to $\g_0$.
We shall adhere ourselves to this convention
whenever it turns convenient to do so.}
\end{Remark}
\medskip

\begin{Remark}{\rm
The linear map $\alpha:\g\to\g$ that works as twist map for the 
skew-symmetric product $\mu$ in $\g$ 
may be considered as an extension of the linear map
$\alpha_0:\g_0\to\g_0$,
which is in turn determined by $k:\g_0\to\g$.
The map $\alpha$, however, is not a unique extension for $\alpha_0=\pi\circ k$. 
The following result shows that there is at least
a different extension $\alpha^\prime:\g\to\g$
that makes $(\g,\mu,\alpha^\prime)$ into a Hom-Lie algebra.
}
\end{Remark}
\medskip

\begin{Cor}\label{corolario}{\sl
Let the hypotheses be as stated in \S1 above
and let $k:\g_0\to\g$ be defined as in \eqref{maps}.
Let $\alpha^\prime:\g \to \g$, be given by,
$\alpha^\prime(x+v)=k(x)$
for all $x \in \g_0$ and for all $v \in V$.
Then, $(\g,\mu,\alpha^\prime)$ is a Hom-Lie algebra, 
and $\alpha^\prime$ satisfies,
for any $x$ and $y$ in $\g$,
\begin{equation}\label{centroide L}
\alpha^\prime([x,y])
=[\alpha^\prime(x),y],
\ \ \text{and}\ \ 
B(\alpha^\prime(x),y)=B(x,\alpha^\prime(y)),
\end{equation}
together with,
\begin{equation}\label{delta producto-1}
\alpha^\prime(\mu(x,y))=[x,y].
\end{equation}
Furthermore, for any $x$ and $y$ in $\g_0$,
\begin{equation}\label{delta producto-2}
\mu(\alpha^\prime(x),y)=[x,y]_0.
\end{equation}
The linear map $\alpha_0$ of the 
Hom-Lie algebra $(\g_0,\mu_0,\alpha_0)$ satisfies:
\begin{equation}
\label{delta producto-3}
\alpha_0(\mu_0(x,y))=\mu_0(\alpha_0(x),y)=[x,y]_0,\ \ \ \text{for all\ }x,y\in\g_0.
\end{equation}
Moreover, $(\g_0,\mu_0,\alpha_0)$
is an ideal of $(\g,\mu,\alpha^\prime)$ if and only if
the 2-cocycle $\theta:\g_0\times\g_0\to V$ 
of the extension is equal to zero.
}
\end{Cor}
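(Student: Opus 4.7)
The plan is to verify the algebraic identities \eqref{centroide L}, \eqref{delta producto-1}, \eqref{delta producto-2} first, then derive the Hom-Jacobi identity for $(\g,\mu,\alpha^\prime)$ as a consequence of these identities and of the ordinary Jacobi identity in $\g$; identity \eqref{delta producto-3} will follow by restriction to $\g_0\times\g_0$; and the characterization of the ideal condition will occupy the last step. Throughout I will write $x=x_0+u$, $y=y_0+v$ with $x_0,y_0\in\g_0$ and $u,v\in V$, recalling that $\alpha^\prime(x)=k(x_0)$, that $\mu(x,y)=h([x,y])\in\g_0$ by its very construction in the proof of Proposition \ref{Prop 3}, and that $V\subset C(\g)$.

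Identity \eqref{delta producto-1} is immediate: since $[x,y]=k(\mu(x,y))$ is the defining equation of $\mu$, and since $\alpha^\prime$ agrees with $k$ on $\g_0\supseteq\mu(\g,\g)$, I get $\alpha^\prime(\mu(x,y))=k(\mu(x,y))=[x,y]$. For \eqref{delta producto-2} I would combine Lemma \ref{lema 1}.(v) with $h\circ k=\operatorname{Id}$ from Lemma \ref{lema 1}.(i): $\mu(\alpha^\prime(x),y)=h([k(x),y])=h(k([x,y]_0))=[x,y]_0$. The first half of \eqref{centroide L} reduces, using $V\subset C(\g)$, to $\alpha^\prime([x_0,y_0])=k([x_0,y_0]_0)=[k(x_0),y_0]=[\alpha^\prime(x),y]$ by Lemma \ref{lema 1}.(v); the second half drops out of Lemma \ref{lema 1}.(ii), both $B(\alpha^\prime(x),y)$ and $B(x,\alpha^\prime(y))$ being equal to $B_0(x_0,y_0)$. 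For the Hom-Jacobi identity I would show that $\mu(\alpha^\prime(x),\mu(y,z))$ coincides with $h([x,[y,z]])$; indeed $[y,z]=k(\mu(y,z))$, so via $V\subset C(\g)$ and Lemma \ref{lema 1}.(v) one has $[x,k(\mu(y,z))]=[k(x_0),\mu(y,z)]=k([x_0,\mu(y,z)]_0)$, whose image under $h$ equals both $[x_0,\mu(y,z)]_0$ and $\mu(\alpha^\prime(x),\mu(y,z))$. Summing cyclically and invoking Jacobi in $\g$ then gives $\sum_\circlearrowleft\mu(\alpha^\prime(x),\mu(y,z))=h(0)=0$. Identity \eqref{delta producto-3} will follow by specialising \eqref{delta producto-1} and \eqref{delta producto-2} to $\g_0\times\g_0$: writing $k=T+R$ with $R$ taking values in $V$ and using $\mu(V,\,\cdot\,)=0$, I obtain $\mu_0(\alpha_0(x),y)=\mu(k(x),y)=[x,y]_0$, while $\alpha_0(\mu_0(x,y))=\pi\circ k(h([x,y]))=\pi([x,y])=[x,y]_0$.

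For the final characterization, the condition $\mu(\g,\g_0)\subseteq\g_0$ is automatic since $\mu$ lands in $\g_0$, so being an ideal reduces to $\alpha^\prime(\g_0)\subseteq\g_0$; by Lemma \ref{lema 1}.(iii), $\alpha^\prime(\g_0)=\operatorname{Im}(k)=V^\perp$, and hence, by dimension, the condition amounts to $V^\perp=\g_0$, i.e. to $\g_0\perp V$, which in turn is equivalent to $R=0$, i.e. to $a_i=0$ for $1\le i\le r$ in the decomposition $R(x)=\sum_i B_0(a_i,x)v_i$. Once $\g_0\perp V$ is granted, invariance of $B$ forces, for $x,y\in\g_0$ and $v\in V$, $B(\theta(x,y),v)=B([x,y]-[x,y]_0,v)=B(x,[y,v])=0$, and non-degeneracy of $B|_{V\times V}$ (which follows from $B$ being non-degenerate on the $B$-orthogonal sum $\g=\g_0\perp V$) yields $\theta=0$. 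The reverse implication $\theta=0\Rightarrow R=0$ is the step I expect to be the main obstacle: Proposition \ref{proposicion 1} reduces $\theta=0$ to $D_i=0$ for all $i$, and Lemma \ref{lema 2}.(vii) then gives $\operatorname{ad}(a_i)=T\circ D_i=0$, so each $a_i$ lies in $C(\g_0)$; pushing this to $a_i=0$ will require invoking the explicit form of the invariant metric $B$ produced by the quadratic central extension construction of \cite{GSV}, which is exactly what pins down the $a_i$ and forces them to vanish when $\theta=0$.
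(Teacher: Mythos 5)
Your treatment of the Hom-Lie structure and of the identities \eqref{centroide L}--\eqref{delta producto-3} is correct and close in spirit to the paper's, with two harmless differences: the paper obtains the twisted Jacobi identity for $\alpha^\prime$ for free from the one already proved for $\alpha$ in Prop.~\ref{Prop 3}, using that $\alpha(x)-\alpha^\prime(x)\in V$ and $\mu(V,\g)=\{0\}$, whereas you recompute it directly (your chain $\mu(\alpha^\prime(x),\mu(y,z))=h([x,[y,z]])$ is exactly the computation in the proof of Prop.~\ref{Prop 3}); and the paper quotes Prop.~2.4 of \cite{GSV} for \eqref{centroide L}, whereas you derive it from Lemma~\ref{lema 1}.(ii) and (v). Your argument for the implication ``$\g_0$ is an ideal $\Rightarrow\theta=0$'', via $\alpha^\prime(\g_0)=\Im(k)=V^{\perp}$, the dimension count giving $V^{\perp}=\g_0$, and then invariance plus centrality of $V$ forcing $B(\theta(x,y),V)=0$, is a valid alternative to the paper's route, which instead observes that $\alpha^\prime$-invariance of $\g_0$ is equivalent to $a_1=\cdots=a_r=0$ and then uses $D_i(\cdot)=\mu(a_i,\cdot)$ to kill the $D_i$, hence $\theta$.

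The one step you do not complete is the converse, $\theta=0\Rightarrow a_1=\cdots=a_r=0$, and your suspicion about it is well founded: from $D_i=0$ you only get $\ad_0(a_i)=T\circ D_i=0$ (Lemma~\ref{lema 2}.(vii)), i.e.\ $a_i\in C(\g_0)$, and the hypotheses of \S1 by themselves do not force $a_i=0$ --- one can equip a trivial (even abelian) extension $\g=\g_0\oplus V$ with an invariant metric $B$ for which $V^{\perp}\neq\g_0$, so that $\Im(k)\not\subseteq\g_0$ although $\theta=0$. You should know, however, that the paper's own proof does not supply this direction either: it only argues that invariance of $\g_0$ under $\alpha^\prime$ forces $a_1=\cdots=a_r=0$ and hence $D_1=\cdots=D_r=0$, and leaves the reverse implication unaddressed. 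Completing the ``if'' half (or restricting the claim to the ``only if'' half) requires precisely the additional input you point to, namely the specific form of the invariant metric coming from the central-extension construction of \cite{GSV}, not just the abstract properties of $h$, $k$, $T$ and $R$ recorded in Lemmas~\ref{lema 1} and \ref{lema 2}. So, apart from honestly flagging that hole --- which the paper shares --- your proposal covers everything the paper's proof covers.
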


\medskip
\begin{proof}
Let $\alpha:\g \to \g$ be the twist map of
{\bf Prop. \ref{Prop 3}}. Since $\mu(V,\g)=\{0\}$,
it is clear that for any $x,y,z,\in\g$,
$$
\aligned
\mu(\alpha(x),\mu(y,z)) & =\mu(\alpha_0(x),\mu(y,z))=\mu(k(x),\mu(y,z))
\\
& =\mu(\alpha^\prime(x),\mu(y,z)).
\endaligned
$$
It then follows that $(\g,\mu,\alpha^\prime)$ is a Hom-Lie algebra.

\medskip
To prove \eqref{centroide L} we claim that
$\alpha^\prime\in\Gamma_B(\g)$. This follows from
the fact $\alpha^\prime(x)=k(x)$ for all $x\in\g_0$
and that $\Ker(\alpha^\prime)=V\subset C(\g)$
according to \textbf{Lemma \ref{lema 1}.(ii)-(v)}.
On the other hand, since $\mu(\,\cdot\,,\,\cdot\,) = h([\,\cdot\,,\,\cdot\,])$,
we may use 
{\bf Lemma \ref{lema 1}}
to conclude that $[x,y]=
\alpha^\prime(\mu(x,y))$, for all $x,y \in \g$,
which proves \eqref{delta producto-1}. 
Similarly, {\bf Lemma \ref{lema 1}}, also shows that for any $x,y \in \g_0$,
$$
\mu(\alpha^\prime(x),y)=h([\alpha^\prime(x),y])=h([k(x),y]_0)=[x,y]_0,
$$
which proves \eqref{delta producto-2}. 
To prove \eqref{delta producto-3}, observe
that $\alpha_0=\pi \circ \alpha^{\prime}$. 
Then, \eqref{delta producto-1} implies that for
any $x$ and $y$ in $\g_0$, 
$\alpha_0(\mu_0(x,y))=[x,y]_0$. On the other hand,
using \eqref{delta producto-2},
we get $\mu_0(\alpha_0(x),y)=[x,y]_0$, as stated.

\medskip
Finally, from {\bf Prop. \ref{Prop 3}.(i)}, we know that 
$\mu(\g_0,\g) \subset \g$.
To test if $\g_0$ is a Hom-Lie ideal of $(\g,\mu,\alpha^\prime)$,
it only remains to check if $\g_0$ is invariant under $\alpha^\prime$.
From the definition of $\alpha^\prime$ and \eqref{k}, it follows that $\g_0$
is invariant under $\alpha^\prime$ if and only if $a_1=\cdots =a_r=0$
and from {\bf Prop. \ref{Prop 3}.(iv)} this implies that $D_1=\cdots=D_r=0$.
\end{proof}

\medskip
\begin{Remark}\label{remark 1}
{\rm
Observe that if $\g_0$ is a semisimple Lie algebra, the sequence \eqref{m} splits.
In that case $[\g,\g]=\g_0$ and $C(\g)=V$; thus, $\g_0$ and $V$ are ideals in $\g$.
However, if $\g_0$ is not semisimple, \eqref{m} does not split
(see {\bf Example 2.16} in \cite{GSV}).
For the Hom-Lie algebra described in {\bf Prop. \ref{Prop 3}}, 
$\g_0$ is an ideal and a direct sumand of $\g$, with $\mu(\g,\g) \subset \g_0$,
regardless of whether $\g_0$ is semisimple or not. 
If we consider the twist map $\alpha^\prime$, however, 
{\bf Cor. \ref{corolario}} shows that $\g_0$ is no longer an ideal
of the Hom-Lie algebra $\g$, unless the 2-cocycle $\theta$ be zero.
Nevertheless, the conditions given in \eqref{centroide L}, \eqref{delta producto-1} and \eqref{delta producto-2}
will allow us to state a deeper result (see {\bf Thm. \ref{teorema}} below) concerning
the Hom-Lie algebra structure given in {\bf Cor. \ref{corolario}}.
}
\end{Remark}

The existence of the invariant metrics 
$B_0:\g_0 \times \g_0 \to \F$ and $B:\g \times \g \to \F$, 
was strongly used in \eqref{maps}  to define
the linear maps $h:\g \to \g_0$ and $k:\g_0 \to \g$
whose properties have been thoroughly used to
define the Hom-Lie algebra structures exhibited
on the underlying spaces $\g_0$ and $\g$.
It is natural to ask whether or not one can still produce 
Hom-Lie algebra structures on the Lie algebras
$\g_0$ and $\g$ related to each other {\it only\/} via \eqref{m};
that is, only through a given $2$-cocycle $\theta$
{\it with no assumption whatsoever\/} on the existence of
any invariant metrics $B_0$ and $B$ on $\g_0$ and $\g$, respectively.
We now settle this question precisely in the following:
\medskip

\begin{Prop}\label{sin-metricas}{\sl
Let $\g_0$ be a Lie algebra, and let $V$ be a finite dimensional vector space.
Assume $\g=\g_0\oplus V$ is a central extension of $\g_0$ by $V$
associated to a given $2$-cocycle $\theta\in Z^2(\g_0,V)$.
Let $[\,\cdot\,,\,\cdot\,]_0$ and $[\,\cdot\,,\,\cdot\,]$ 
be the Lie brackets on $\g_0$ and $\g$, respectively.
Let $h:\g \to \g_0$ and $k:\g_0 \to \g$ be linear maps satisfying:

\begin{itemize}
\item[(i)] 
$k([x,y]_0)=[k(x),y]$, for all $x,y \in \g_0$

\item[(ii)] 
$k(h([x,y]))=[x,y]$, for all $x,y \in \g$.

\end{itemize}
Then, there is a linear map $\alpha:\g\to\g$ which,
together with the skew-symmetric bilinear map 
$\mu:\g\times\g\to\g$ given by $\mu(x,y)=h([x,y])$,
makes the triple $(\g,\mu,\alpha)$ into a Hom-Lie algebra.
Moreover, by
letting $\mu_0=\mu\vert_{\g_0\times\g_0}$ and
$\alpha_0 = \pi\circ k$, where
$\pi:\g\to\g_0$ is the projection map in \eqref{m},
the triple
$(\g_0,\mu_0, \alpha_0)$
becomes a Hom-Lie subalgebra.
Actually, $\alpha:\g\to\g$ can be written in the form
$\alpha(x+v)=\alpha_0(x)+\sigma(x+v)$, for some linear map $\sigma:\g\to V$.
If $\sigma\vert_{\g_0}=0$, then 
$(\g_0,\mu_0,\alpha_0)$ is actually 
a Hom-Lie ideal and a direct summand of $(\g,\mu,\alpha)$
with  $\mu\vert_{V\times V}=0$ and,
$$
(\g,\mu,\alpha)=(\g_0,\mu_0,\alpha_0)
\oplus
(V,\mu\vert_{V\times V},\sigma\vert_V).
$$
}
\end{Prop}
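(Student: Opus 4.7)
The plan is to exhibit an explicit twist map, namely $\alpha=k\circ\pi:\g\to\g$, and verify every claim from this single choice. Since $\pi(x+v)=x$, this gives $\alpha(x+v)=k(x)=\alpha_0(x)+\sigma(x+v)$, where $\alpha_0=\pi\circ k$ and $\sigma=\pi_V\circ k\circ\pi$ (with $\pi_V:\g\to V$ the projection onto the $V$-summand). Observe that $\sigma$ factors through $\pi$ and so vanishes identically on $V$, while $\sigma|_{\g_0}=\pi_V\circ k$ measures whether $k$ lands inside $\g_0$. Skew-symmetry of $\mu=h\circ[\,\cdot\,,\,\cdot\,]$ is inherited from the Lie bracket, and centrality of $V$ in $\g$ (which is built into the central-extension structure \eqref{tc1}) immediately yields $\mu(V,\g)=\mu(\g,V)=0$, so in particular $\mu\vert_{V\times V}=0$.

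The heart of the argument is the identity
\[
\mu(\alpha(x),\mu(y,z))=h([x,[y,z]])\qquad\text{for all }x,y,z\in\g.
\]
I would prove it from both sides. First, $\mu(y,z)=h([y,z])\in\g_0$, so hypothesis (i) gives $[\alpha(x),\mu(y,z)]=[k(\pi(x)),\mu(y,z)]=k([\pi(x),\mu(y,z)]_0)$, and applying $h$ produces $\mu(\alpha(x),\mu(y,z))=h(k([\pi(x),\mu(y,z)]_0))$. Second, hypothesis (ii) yields $[y,z]=k(\mu(y,z))$, centrality of $V$ removes the $V$-part of $x$, and one more use of (i) together with skew-symmetry of the bracket gives $[x,[y,z]]=[\pi(x),k(\mu(y,z))]=k([\pi(x),\mu(y,z)]_0)$. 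Comparing, the identity holds. Summing cyclically in $x,y,z$ and using the Jacobi identity for $[\,\cdot\,,\,\cdot\,]$ in $\g$ then gives the twisted Jacobi identity for $(\g,\mu,\alpha)$.

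For the induced triple $(\g_0,\mu_0,\alpha_0)$, I would simply restrict the identity above to $x,y,z\in\g_0$. The $V$-valued piece $\sigma(x)$ of $\alpha(x)$ lies in $C(\g)$ and therefore contributes nothing to $\mu(\alpha(x),\mu(y,z))$, so the $\g$-level twisted Jacobi identity collapses to its $\g_0$-level counterpart in $\mu_0$ and $\alpha_0$. Under the extra hypothesis $\sigma|_{\g_0}=0$ we obtain $\alpha(\g_0)=\alpha_0(\g_0)\subseteq\g_0$, which combined with $\mu(\g_0,\g)\subseteq\g_0$ (since $\mu$ takes values in $\g_0$) shows that $(\g_0,\mu_0,\alpha_0)$ is a Hom-Lie ideal. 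Since $\alpha(V)=\sigma(V)\subseteq V$ and $\mu\vert_{V\times V}=0$, the triple $(V,0,\sigma|_V)$ is trivially a Hom-Lie algebra, and the decomposition $(\g,\mu,\alpha)=(\g_0,\mu_0,\alpha_0)\oplus(V,\mu\vert_{V\times V},\sigma|_V)$ follows at once from $\g=\g_0\oplus V$ and the vanishing of the off-diagonal blocks of $\mu$ and $\alpha$.

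The main obstacle is the key identity $\mu(\alpha(x),\mu(y,z))=h([x,[y,z]])$, which plays the role of Lemma~\ref{lema 2}.(iii) in the metric-free setting. Neither hypothesis (i) nor (ii) suffices on its own: one must use (ii) to convert $[y,z]$ into $k(\mu(y,z))$, exploit centrality of $V$ to drop the $V$-component of $x$, and then apply (i) to pull $k$ outside the bracket. Once this is settled, the remainder of the proof is routine bookkeeping with $\pi$, $\pi_V$, and the centrality of $V$.
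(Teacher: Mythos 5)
Your proof is correct, and its core is the same as the paper's: everything hinges on the identity $\mu(\alpha(x),\mu(y,z))=h([x,[y,z]])$, obtained from hypothesis (ii) to rewrite $[y,z]=k(\mu(y,z))$, centrality of $V$ to discard the $V$-component of $x$, and hypothesis (i) to pull $k$ out of the bracket, after which the cyclic sum and the Jacobi identity of $[\,\cdot\,,\,\cdot\,]$ finish the twisted Jacobi identity. The one real difference is scope: you commit from the start to the single twist $\alpha=k\circ\pi$, i.e.\ $\sigma=\pi_V\circ k\circ\pi$, whereas the paper takes $\sigma:\g\to V$ \emph{arbitrary} and shows that every $\alpha=\alpha_0\circ\pi+\sigma$ works. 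This matters for the last clause: with your fixed choice, the hypothesis $\sigma\vert_{\g_0}=0$ amounts to $k(\g_0)\subseteq\g_0$, which in the interesting (non-split) situations fails, so for your $\alpha$ the ideal/direct-summand conclusion is realized only in that special case; in the paper's reading the family includes, e.g., $\sigma=\pi_V$ (the twist of Prop.~\ref{Prop 3}) or $\sigma=0$, for which $\sigma\vert_{\g_0}=0$ holds and the decomposition $(\g,\mu,\alpha)=(\g_0,\mu_0,\alpha_0)\oplus(V,\mu\vert_{V\times V},\sigma\vert_V)$ is genuinely non-vacuous, and this generality is what the subsequent Remark relies on. Fortunately your argument uses nothing about $\sigma$ beyond $\sigma(\g)\subseteq V\subseteq C(\g)$ (in the key identity you may replace $k(\pi(x))$ by $\alpha(x)$ modulo central elements), so it extends verbatim to arbitrary $\sigma$; stating it that way would recover the paper's full intent at no extra cost.
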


\medskip
\begin{proof}
Set $\mu(x,y)=h([x,y])$ for any pair $x,y\in\g$.
Since $V\subset C(\g)$,
it follows that $h([v,x])=0$ for any $x \in \g$ and $v\in V$.
Let $\sigma:\g\to V$ be a linear map and
write $\alpha:\g\to\g$ as in the statement; namely,
$\alpha(x+v)=\alpha_0(x)+\sigma(x+v)$ for any $x\in\g_0$ and $v\in V$.
If $x \in \g_0$, $v \in V$ and $y,z \in \g$, it follows that,
$$
\aligned
\mu(\alpha(x+v),\mu(y,z))
& =\mu(\alpha_0(x)+\sigma(x+v),h([y,z])) \\
& =\mu(\alpha_0(x),h([y,z])) =\mu(k(x),h([y,z])) \\
& =h([k(x),h([y,z])])=h([x,k(h([y,z]))])\\
&=h([x,[y,z]])=h([x+v,[y,z]]).
\endaligned
$$
Thus, for any triple $x,y,z\in\g$ we have,
$$
\sum_{\circlearrowright}\mu(\alpha(x),\mu(y,z))=\sum_{\circlearrowright}h([x,[y,z]])=0.
$$
Therefore, the triple $(\g,\mu,\alpha)$ is a Hom-Lie algebra for the particular $\mu$
given by $\mu(x,y)=h([x,y])$ and $\alpha=\pi\circ k\circ\pi +\sigma=\alpha_0\circ\pi+\sigma$.
It is now easy to see that the triple $(\g_0,\mu_0, \alpha_0)$
is a Hom-Lie subalgebra with the property that $\mu(\g,\g)\subset\g_0$.
To see whether or not this triple
is an ideal in $(\g,\mu,\alpha)$ one needs to check whether or not
$\g_0$ is an invariant subspace for $\alpha:\g\to\g$. From the definition of $\alpha$,
it follows that
$\g_0$ is $\alpha$-invariant if and only if $\sigma\vert_{\g_0}=0$,
thus proving the last statement.
\end{proof}
\medskip

\begin{Remark}{\rm
Observe that {\it any\/} twist map 
$\alpha:\g \to \g$ for the Hom-Lie multiplication map $\mu$ satisfies
$\alpha_0=\pi \circ \alpha\vert_{\g_0}=\pi \circ k\vert_{\g_0}$. 
In particular, two twist maps $\alpha$ and $\alpha^\prime$
for the same $\mu$
can differ only by a linear map $\sigma:\g \to V$. Therefore,
the Hom-Lie algebra structure on $\g$ of {\bf Prop \ref{Prop 3}}
corresponds to the case $\sigma(x)=0$ for all $x \in \g_0$.
On the other hand, the Hom-Lie algebra structure on $\g$ of {\bf Cor. \ref{corolario}}
corresponds to the case $\sigma(x)=\pi_V\circ k(x)$, for all $x \in \g_0$.}
\end{Remark}

\medskip
We have pointed out in {\bf Remark \ref{remark 1}} that 
$\mu(\g,\g)\subset\g_0$ and $V\subset C(\g)$.
If we add the hypothesis that the $2$-cocycle
of the extension $\g$ of $\g_0$ by $V$ is not a coboundary,
then we actually have equalities: $\mu(\g,\g)=\g_0$ and $V=C(\g)$.
Now recall from \cite{GSV} that if $\g$ admits an invariant metric
and $\dim_{\F}(V) \leq 2$, then \eqref{m} splits and therefore
$\theta$ is a coboundary, which implies 
that $\g_0$ is actually a Lie algebra ideal of $\g$.
Thus, if we assume that $\theta$ is not a coboundary,
then $\dim_{\F}(V)>2$ under the assumption that
$\g$ admits an invariant metric.
\medskip

\begin{Prop}\label{perfect hom-Lie algebra}{\sl
Let the hypotheses be as in {\bf Prop \ref{Prop 3}}. 
Suppose that at least one of the $B$-skew-symmetric derivations 
$D_1$, $\ldots$, $D_r$, is not inner.
Let $C_{\mu}(\g)=\{x \in \g\mid h([x,\g])=\{0\}=\mu(x,\g)\}$. 
Then,
\begin{itemize}

\item[(i)] $C(\g)=C_{\mu}(\g)=V$;

\smallskip
\item[(ii)] $\g_0=h([\g,\g])=\mu(\g,\g)=\mu(\g_0,\g_0)$;

\smallskip
\item[(iii)]
$C(\g_0)=\Ker(\alpha_0)$ and $[\g_0,\g_0]_0=\Im(\alpha_0)$;

\smallskip
\item[(iv)] If $x \in \g_0$ satisfies $\theta(x,y)=0$ for all $y \in \g_0$, then $x=0$.

\end{itemize}
}
\end{Prop}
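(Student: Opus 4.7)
My plan is to establish the four items in the order (iv), (i), (ii), (iii), since (iv) is the key observation from which the others cascade.

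First I would prove (iv). By Proposition \ref{proposicion 1} and the non-degeneracy of $B_0$, the condition $\theta(x,y)=0$ for all $y\in\g_0$ is equivalent to $x$ lying in $J:=\bigcap_{i=1}^{r}\Ker(D_i)$, so (iv) reduces to showing $J=0$. Using Lemma \ref{lema 2}.(v), $x\in J$ iff $[a_i,x]=0$ in $\g$ for every $i$. Applying the 2-cocycle identity
$$\theta(x,[y,z]_0)+\theta(y,[z,x]_0)+\theta(z,[x,y]_0)=0$$
to $x\in J$ (so the first term vanishes) and re-expressing the remaining terms through the correspondence $\theta(\,\cdot\,,\,\cdot\,)=\sum_i B_0(D_i(\,\cdot\,),\,\cdot\,)\,v_i$, one finds that $\ad_0(x)$ commutes with every $D_i$. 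The main obstacle of the proposition is to leverage the hypothesis that some $D_j$ fails to be inner in order to conclude $x=0$; my strategy is to show that a hypothetical non-zero $x\in J$ can be absorbed into a correction of $a_j$ (using the invariance of $B$ together with the formula $D_j=\rho(a_j)$ from Lemma \ref{lema 2}.(iv)) producing a $b\in\g_0$ with $D_j=\ad_0(b)$, contradicting non-inneness.

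Granting (iv), assertion (i) falls out. Proposition \ref{Prop 3} already gives $V\subseteq C(\g)$ and $\mu(V,\g)=0$, whence $V\subseteq C_\mu(\g)$. For the reverse inclusion $C(\g)\subseteq V$, write $z=x+v\in C(\g)$ with $x\in\g_0$; then $0=[z,y]=[x,y]_0+\theta(x,y)$ for every $y\in\g_0$, so the $V$-component $\theta(x,y)=0$, and (iv) forces $x=0$, so $z=v\in V$. For $C_\mu(\g)\subseteq C(\g)$, the construction in the proof of Proposition \ref{Prop 3} shows $[z,y]=k(\mu(z,y))$ for all $z,y\in\g$; since $k$ is injective (Lemma \ref{lema 1}.(i)), $\mu(z,y)=0$ iff $[z,y]=0$.

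For (ii), since $\mu(V,\g)=0$, we have $\mu(\g,\g)=\mu(\g_0,\g_0)$, and $h([\g,\g])=\mu(\g,\g)$ is the very definition of $\mu$. To prove $\mu(\g_0,\g_0)=\g_0$ I would show its $B_0$-orthogonal is trivial. Take $x_0\in\mu(\g_0,\g_0)^{\perp_{B_0}}$; the identity $B_0(x_0,h(w))=B(x_0,w)$ for $w\in\g$ (immediate from Lemma \ref{lema 1}.(ii) and the symmetry of $B_0$) combined with the invariance of $B$ yields $B([x_0,y],z)=0$ for all $y,z\in\g_0$, so $[x_0,\g_0]\subseteq\g_0^{\perp_B}$; invariance of $B$ together with $V\subseteq C(\g)$ gives $B([x_0,y],v)=B(x_0,[y,v])=0$, hence also $[x_0,\g_0]\subseteq V^{\perp_B}$. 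As $\g=\g_0+V$ and $B$ is non-degenerate, $[x_0,\g_0]=0$; combined with $[x_0,V]=0$, $x_0\in C(\g)\cap\g_0=V\cap\g_0=0$ by (i).

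Finally for (iii): $\Ker T\subseteq C(\g_0)$ is Lemma \ref{lema 2}.(ii). Conversely, if $x\in C(\g_0)$, Lemma \ref{lema 1}.(v) yields $[k(x),y]=k([x,y]_0)=0$ for all $y\in\g_0$ and $[k(x),v]=0$ for $v\in V$, so $k(x)\in C(\g)=V$ by (i), forcing $T(x)=\pi(k(x))=0$. The equality $[\g_0,\g_0]_0=\Im T$ then follows from the $B_0$-symmetry of $T$ (so $\Im T=(\Ker T)^{\perp_{B_0}}$) together with the standard identity $[\g_0,\g_0]_0=C(\g_0)^{\perp_{B_0}}$ valid in any quadratic Lie algebra. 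The principal hurdle throughout is (iv), as anticipated.
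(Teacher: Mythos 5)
Your reductions are sound in items (i)--(iii): granting (iv), the arguments you give for $C(\g)=C_\mu(\g)=V$, for $\mu(\g_0,\g_0)=\g_0$ (via the $B_0$-orthogonal complement), and for $\Ker T=C(\g_0)$, $\Im T=[\g_0,\g_0]_0$ (via $B_0$-symmetry of $T$ and $[\g_0,\g_0]_0=C(\g_0)^{\perp}$) are correct and close in spirit to the paper's, in places even a bit cleaner. The genuine gap is exactly where you place the weight: item (iv), i.e.\ the claim $\bigcap_{i=1}^r\Ker(D_i)=\{0\}$ when some $D_j$ is not inner. You never prove it. The cocycle manipulation you invoke only yields that $\ad_0(x)$ commutes with every $D_i$, which is automatic for any derivation annihilating $x$ and gives no leverage; and the announced strategy --- ``absorb a nonzero $x\in J$ into a correction of $a_j$ to produce $b$ with $D_j=\ad_0(b)$'' --- is stated without any mechanism. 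There is no evident way in which the existence of a single common-kernel vector forces $D_j$ to become inner, so as written this is a non sequitur rather than a proof sketch.

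Note also that the statement is genuinely delicate: it fails without the standing hypothesis that the extension $\g=\g_0\oplus V$ itself carries an invariant metric $B$ (e.g.\ a Heisenberg-type extension of an abelian quadratic $\g_0$ by a one-dimensional $V$ via a nonzero $B_0$-skew map $D_1$ with nontrivial kernel satisfies all the other hypotheses, $D_1$ is non-inner, yet $\bigcap\Ker D_i\neq 0$; such a $\g$ admits no invariant metric). So any proof must use the interplay encoded in $h$, $k$, $T$ and the elements $a_i+w_i$, which your sketch does not. The paper does not reprove this fact either: it quotes it from \cite{GSV} (Lemma~2.2 and Prop.~2.5.(ii) there), and then derives (i)--(iv) from it much as you do. To repair your proposal, either cite that result of \cite{GSV} at the point where you claim $J=0$, or supply an actual argument exploiting $B$ on $\g$; the absorption idea, as it stands, does not do the job.
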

\begin{proof}
{\bf (i)}  To say that one of the $B_0$-skew-symmetric derivations
$D_i$, $1\le i\le r$, is not inner, is equivalent to say that the 
$2$-cocycle $\theta$ of the extension is not a coboundary. 
We shall refer the reader to \cite{GSV}
(specifically, {\bf Lemma 2.2} and {\bf Prop. 2.5.(ii)} therein)
where it is proved that if at least one $D_i$ is not inner, then
$\displaystyle{\cap_{i=1}^r\Ker(D_i)}=\{0\}$. Since
$C(\g)=C(\g_0) \cap (\cap_{i=1}^r\Ker(D_i)) \oplus V$,
it follows that $C(\g)=V$ when $\theta$ is not a coboundary.
It remains to prove that $C_{\mu}(\g)=V$.
This is done in the first part of {\bf (ii) below.}

\medskip
{\bf (ii)} Let $x \in \g_0$ be such that $h([x,y])=\mu(x,y)=0$, for all $y \in \g_0$.
Then,  $\mu(x,\g)\in\operatorname{Ker}(h)\cap V^\perp = \g_0^\perp\cap V^\perp
 = \{0\}$. By {\bf Lemma \ref{lema 1}.(iii)}, $x \in C(\g)=V$ implies $x=0$.
Whence, $C_{\mu}(\g)=V$. Finally, it was proved in {\bf Prop. \ref{Prop 3}}
that $B_0$ is invariant under $\mu_0$. 
Thus, $C_{\mu}(\g_0)^{\perp}=\mu(\g_0,\g_0)$ and
$\g_0=\mu(\g_0,\g_0)$.

\medskip
{\bf (iii)}
Recall that $\alpha_0=\pi\circ k$.
We deduce
from {\bf Lemma \ref{lema 2}.(vi)-v(ii))}
that $\alpha_0\left(h([\g_0,\g_0])\right) 
=[\g_0,\g_0]_0$.
Whence, 
$\alpha_0(\g_0)=[\g_0,\g_0]_0$,
which proves {\bf (iii)}. 
Using the fact that
$\alpha_0\in\Gamma_{B_0}(\g_0)$
together with {\bf Lemma \ref{lema 2}.(ii)}, we obtain, 
$\Ker(\alpha_0)=C(\g_0)$.

\medskip
{\bf (iv)} The result follows from the fact that 
$\theta(x,y)=0$ for all $y \in \g$, if and only if $x\in \cap_{i=1}^r\Ker(D_i)=\{0\}$.
\end{proof}
\medskip

\medskip
\begin{Remark}\label{remark 2}{\rm
As far as the Hom-Lie algebra structure on $\g$ is concerned,
the statements in {\bf Prop. \ref{perfect hom-Lie algebra}}
are independent of the twist map. Thus they hold true either with
$\alpha$ (defined in {\bf Prop. \ref{Prop 3}})
or with $\alpha^\prime$ (defined in {\bf Cor. \ref{corolario}}).
}
\end{Remark}

\medskip
\section{Killing-like forms for Hom-Lie algebras}

\medskip
We shall focus ourselves on property
\eqref{delta producto-3} of {\bf Cor. \ref{corolario}}.
The aim of this section is to prove in general that, in any 
Hom-Lie algebra $(\g,\mu,\alpha)$ with an underlying
Lie bracket $[\,\cdot\,,\,\cdot\,]$ on $\g$, if 
\begin{equation}\label{delta producto-3 otra vez}
\alpha(\mu(x,y))=\mu(\alpha(x),y)=[x,y],
\end{equation}
holds true for any $x$ and $y$ in $\g$, 
one may produce 
a symmetric bilinear form $K:\g\times\g\to\Bbb F$ satisfying,
$$
K\left(\mu(x,y),z\right) = K\left(x,\mu(y,z)\right).
$$
The next result shows that $K$ is actually given by a trace formula
involving only the maps $\ad_\mu(x)=\mu(x,\,\cdot\,):\g\to\g$
and $\alpha:\g\to\g$,
exhibiting $K$ as a natural generalization of the Cartan-Killing
form of a Lie algebra $\g$.

\medskip
\begin{Theorem}\label{Teorema Cartan-Killing}
{\sl
Let $(\g,[\,\cdot\,,\,\cdot\,])$ be a Lie algebra for which there exists
a skew-symmetric bilinear map $\mu:\g \times \g \to \g$ and a linear map
$\alpha:\g \to \g$, turning $(\g,\mu,\alpha)$ into a Hom-Lie algebra 
satisfying \eqref{delta producto-3 otra vez}.
For each $x\in\g$ write $\ad_{\mu}(x)=\mu(x,\,\cdot\,):\g\to\g$.
Then, the assignment,
$$
(x,y)\ \mapsto\ K(x,y)=\Tr\left(\ad_{\mu}(x)\circ\ad_{\mu}(y)\circ\alpha\right),
\quad\text{for all\ }x,y\in\g,
$$
defines a symmetric, bilinear form on $\g$ which further satisfies:
$$
\alignedat 3
K(\alpha(x),y) & =K(x,\alpha(y))=\kappa(x,y),\quad &\quad &\text{for all\ }x,y\in\g;
\\
K\left(\mu(x,y),z\right) & =K\left(x,\mu(y,z)\right),\quad &\quad &\text{for all\ }x,y,z\in\g;
\\
K\left([x,y],z\right) & =K\left(x,[y,z]\right),\quad &\quad &\text{for all\ }x,y,z\in\g.
\endalignedat
$$
Moreover, 
\begin{itemize}
\item[(i)] $K$ is non-degenerate if and only if
the Cartan-Killing form $\kappa$ of $\g$ is.
\item[(ii)] If $K\left(x,[y,z]\right)=0$ for all $x,y,z \in \g$, then $\g$ is a solvable Lie algebra. 
\item[(iii)] $K=0$, if and only if $\g$ is a nilpotent Lie algebra. 
\end{itemize}
}
\end{Theorem}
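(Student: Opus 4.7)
The plan is to systematically exploit the commutation identities forced by Hypothesis (B). Writing $\phi_x := \ad_{\mu}(x)$, condition (B) rearranges to $\alpha\,\phi_x = \phi_x\,\alpha = \phi_{\alpha(x)} = \ad(x)$; that is, $\alpha$ commutes with every $\phi_x$, and the composition $\phi_x\,\alpha$ recovers the ordinary Lie adjoint $\ad(x)$. From these one immediately rewrites $K(x,y) = \Tr(\phi_x\,\phi_y\,\alpha) = \Tr(\phi_x\,\ad(y)) = \Tr(\ad(x)\,\phi_y)$, so symmetry of $K$ follows from cyclicity of trace, and substituting $\phi_{\alpha(x)} = \ad(x)$ inside the trace gives the first displayed identity $K(\alpha(x),y) = K(x,\alpha(y)) = \kappa(x,y)$.

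For the remaining two invariance identities, the key step is to extract from the twisted Jacobi identity using (B) the auxiliary relation
\[
\phi_{[x,y]} \;=\; \ad(x)\,\phi_y - \ad(y)\,\phi_x \;=\; [\phi_x,\phi_y]\,\alpha.
\]
Combined with $\ad(a)\,\phi_b = \phi_a\,\ad(b) = \phi_a\,\phi_b\,\alpha$ (an immediate consequence of $\alpha$ commuting with $\phi_b$), the $\mu$-invariance $K(\mu(x,y),z) = K(x,\mu(y,z))$ reduces to $\Tr([\phi_x,\phi_y]\,\phi_z\,\alpha) = \Tr(\phi_x\,[\phi_y,\phi_z]\,\alpha)$, which I would verify by expansion and cyclic rearrangement. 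The Lie-bracket invariance $K([x,y],z) = K(x,[y,z])$ comes from the same commutation trick together with $\ad([x,y]) = [\ad(x),\ad(y)]$.

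For (i) I would establish $\Rad(K) = \Rad(\kappa)$: the inclusion $\Rad(K) \subseteq \Rad(\kappa)$ is immediate on specialising $y = \alpha(z)$ in $K(x,y) = 0$, while the reverse rests on the chain $\Ker(\alpha) \subseteq C(\g) \subseteq \Rad(K)$ (since $\alpha(x) = 0$ forces $\ad(x) = \phi_{\alpha(x)} = 0$, and $\ad(x) = 0$ makes $K(x,y) = \Tr(\ad(x)\,\phi_y)$ vanish) coupled with the standard $C(\g) \subseteq \Rad(\kappa)$. For (ii), the hypothesis combined with $K([x,y],z) = K(x,[y,z]) = 0$ (from the invariance just proved) and the specialisation $z = \alpha(w)$ gives $\kappa([x,y],w) = K([x,y],\alpha(w)) = 0$ for every $x,y,w \in \g$; thus $\kappa([\g,\g],\g) = 0$ and Cartan's solvability criterion applies.

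The forward direction of (iii), $\g$ nilpotent $\Rightarrow K = 0$, is relatively direct: using the commutation $\ad(a)\,\phi_b = \phi_a\,\ad(b)$, one rewrites $(\phi_x\,\ad(y))^c$ as a product $\phi_{u_1}\cdots\phi_{u_c}\,\ad(v_1)\cdots\ad(v_c)$ with $u_i,v_j \in \{x,y\}$; if $\g$ is nilpotent of class $c$ then the $\ad$-factor vanishes, so $\phi_x\,\ad(y)$ is nilpotent, hence trace-zero, giving $K(x,y) = \Tr(\phi_x\,\ad(y)) = 0$. The converse $K = 0 \Rightarrow \g$ nilpotent is where the real difficulty lies: $K = 0$ directly yields only $\kappa = 0$ (hence solvability via (ii)), whereas nilpotence is strictly stronger. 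My strategy is to exploit the finer vanishing $\Tr(\ad(x)\,\phi_y) = 0$ for \emph{every} $y \in \g$, not only those in $\alpha(\g)$, in order to show, after triangularising $\ad(\g)$ via Lie's theorem, that the diagonal (semisimple) part of every $\ad(x)$ must vanish, so every $\ad(x)$ is nilpotent and Engel's theorem delivers nilpotence of $\g$.
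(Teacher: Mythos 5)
Your derivation of the basic identities is correct and is essentially the paper's own argument in compressed form: the relations $\alpha\circ\ad_{\mu}(x)=\ad_{\mu}(x)\circ\alpha=\ad_{\mu}(\alpha(x))=\ad(x)$ (the middle one uses the skew-symmetry of $\mu$ and of the bracket, as you tacitly do), the relation $\ad_{\mu}([x,y])=\ad(x)\circ\ad_{\mu}(y)-\ad(y)\circ\ad_{\mu}(x)$ extracted from the twisted Jacobi identity via {\bf (B)}, and cyclicity of the trace are exactly the paper's tools. Where you differ, you differ to advantage. In (ii) you deduce $\kappa(\g,[\g,\g])=0$ directly from $\kappa(w,[x,y])=K(\alpha(w),[x,y])=0$ and invoke Cartan's solvability criterion, whereas the paper goes through the trace lemma from Humphreys to show each $\ad(\mu(x,y))$ is nilpotent and $[\g,\g]\subset\Nil(\g)$; your route is shorter and correct. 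For the implication ($\g$ nilpotent $\Rightarrow K=0$) your use of $\ad(a)\circ\ad_{\mu}(b)=\ad_{\mu}(a)\circ\ad(b)$ to push all $\ad$-factors to the right in $\left(\ad_{\mu}(x)\circ\ad(y)\right)^{c}$, so that this operator is nilpotent and hence traceless, is correct and replaces the paper's induction on $\dim\g$ over a complement of $\Ker(\alpha)$. In (i), the asserted equality $\Rad(K)=\Rad(\kappa)$ is not what your chain proves (it gives $C(\g)\subseteq\Rad(K)\cap\Rad(\kappa)$, not $\Rad(\kappa)\subseteq\Rad(K)$), but the non-degeneracy equivalence does follow from your ingredients: $\Rad(K)\subseteq\Rad(\kappa)$ by setting $y=\alpha(z)$, and, when $K$ is non-degenerate, $\Rad(\kappa)\subseteq\Ker(\alpha)\subseteq C(\g)\subseteq\Rad(K)=\{0\}$; this is the paper's argument in substance.

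The genuine gap is the converse half of (iii), which you leave as a strategy, and that strategy cannot be completed: under hypotheses {\bf (A)}--{\bf (B)} alone the implication $K=0\Rightarrow\g$ nilpotent is false. Indeed {\bf (A)}--{\bf (B)} are satisfied by every Lie algebra upon taking $\mu=[\,\cdot\,,\,\cdot\,]$ and $\alpha=\operatorname{Id}_{\g}$, and then $K=\kappa$, while $\kappa=0$ only forces solvability. Concretely, over $\C$ let $\g=\operatorname{Span}\{x,y_1,y_2\}$ with $[x,y_1]=y_1$, $[x,y_2]=iy_2$, $[y_1,y_2]=0$: here $\kappa\equiv 0$ (for instance $\kappa(x,x)=1+i^{2}=0$), yet $\ad(x)$ has nonzero semisimple part, so $\g$ is solvable but not nilpotent. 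This is precisely where your plan breaks: with $\alpha=\operatorname{Id}$ the ``finer vanishing'' $\Tr(\ad(x)\circ\ad_{\mu}(y))=0$ for all $y$ is nothing more than $\kappa=0$, and after triangularizing via Lie's theorem it does not annihilate the diagonal parts of the $\ad(x)$. For comparison, the paper's own proof of this direction consists of the single assertion that $K=0$ gives $\kappa=0$ ``and therefore $\g$ is a nilpotent Lie algebra,'' which is the same invalid step; so the difficulty you flagged reflects a defect in statement (iii) itself rather than a missing idea you could have been expected to supply.
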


\medskip
\begin{proof}
Combining \eqref{delta producto-3 otra vez}
with the Hom-Lie Jacobi identity
satisfied by $(\g,\mu,\alpha)$, we get,
\begin{equation}\label{traza2}
\displaystyle{\sum_{\circlearrowleft\{x,y,z\}}}
[x,\mu(y,z)]=0,\quad \text{for all\ }\,x,y,z \in \g.
\end{equation}
Thus, for any $x,y,z \in\g$, we deduce that,
\begin{eqnarray}
\label{traza3}\,[x,\mu(y,z)]&=&[\mu(x,y),z]+[y,\mu(x,z)],\\
\label{traza4}\,\mu\left(x,[y,z]\right)&=&\mu\left([x,y],z\right) + \mu\left(x,[y,z]\right).
\end{eqnarray}
It follows from
\eqref{delta producto-3 otra vez} 
that for any $x,y\in\g$,
\begin{equation}\label{traza6}
K(x,y)=\Tr(\ad(x) \circ \ad_{\mu}(y))=\Tr(\ad_{\mu}(x) \circ \ad(y)).
\end{equation}
Similarly, using
\eqref{delta producto-3 otra vez} 
we get,
\begin{equation}\label{relacion K y cartan-killing}
\aligned
K(\alpha(x),y)&=\operatorname{Tr}(\ad_{\mu}(\alpha(x)) \circ \ad(y))\\
\,&=\operatorname{Tr}(\ad(x) \circ \ad(y))=\kappa(x,y),\quad\text{for all\ }\,x,y \in \g.
\endaligned
\end{equation}
It is also true that for any $x,y,z \in \g$,
\begin{equation}
\label{inv-K_0}
K\left(\mu(x,y),z\right)=K\left(x,\mu(y,z)\right).
\end{equation}
To prove this claim, let $w\in\g$ be arbitrary. Then \eqref{traza4} implies that,
$$
\ad_{\mu}([x,y]) \circ \ad_{\mu}(z)(w)
=\mu\left(x,[y,\mu(z,w)]\right)-\mu\left(y,[x,\mu(z,w)]\right).
$$
Write $s_1(w)=\mu\left(x,[y,\mu(z,w)]\right)$ 
and $s_2(w)=\mu\left(y,[x,\mu(z,w)]\right)$ for all $w \in \g$.
Using \eqref{delta producto-3 otra vez} we may write, 
$s_2=\ad_{\mu}(y) \circ \ad_{\mu}(x) \circ \ad_{\mu}(z) \circ \alpha$.
On the other hand, by \eqref{traza4} and 
\eqref{delta producto-3 otra vez},
we have,
$$
\aligned
(\,\ad_{\mu}(x) \,\circ\, &
\ad_{\mu}{\left(\mu(y,z)\right)} \circ \alpha\,)\,(w)
=\ad_{\mu}(x) \circ \ad_{\mu}([y,z])(w)
\\
&=\mu\left(x,{\mu\left(y,[z,w]\right)}\right)-\mu\left(x,{\mu\left(z,[y,w]\right)}\right).
\endaligned
$$
Now write $t_1(w)=\mu\left(x,\mu(y,[z,w])\right)$
and $t_2(w)=\mu\left(x,\mu(z,[y,w])\right)$ for all $w \in \g$.
Again, using \eqref{delta producto-3 otra vez}
we may write, 
$t_2=\ad_{\mu}(x) \circ \ad_{\mu}(z) \circ \ad_{\mu}(y) \circ \alpha$. 
From \eqref{delta producto-3 otra vez}
we conclude that, $s_1(w)=t_1(w)$ for all $w \in \g$, 
so $t_1=s_1$ and $\Tr(s_1)=\Tr(t_1)$. 
Since $\alpha$ commutes with $\ad_{\mu}$, we obtain,
$$
\aligned
\Tr(s_2)&=\Tr(\ad_{\mu}(y) \circ \ad_{\mu}(x) \circ\ad_{\mu}(z) \circ \alpha)
\\
&=\Tr(\ad_{\mu}(x) \circ \ad_{\mu}(z) \circ \alpha \circ \ad_{\mu}(y))
\\
&=\Tr(\ad_{\mu}(x) \circ\ad_{\mu}(z) \circ\ad_{\mu}(y) \circ \alpha)
=\Tr(t_2).
\endaligned
$$
Therefore, $K\left(\mu(x,y),z\right)=K\left(x,\mu(y,z)\right)$ as claimed. 
Now we shall prove that $K([x,y],z)=K(x,[y,z])$, for all $x,y,z \in \g$. 
Observe that \eqref{delta producto-3 otra vez}
implies that $\alpha$ is $\ad_{\mu}$-equivariant.
Then, $\alpha$ is $K$-symmetric; 
that is, $K(\alpha(x),y)=K(x,\alpha(y))$, for all $x,y \in \g$. 
Now, from \eqref{delta producto-3 otra vez}
we deduce that,
$$
\aligned
K([x,y],z)&=K(\alpha(\mu(x,y)),z)=K(\mu(x,y),\alpha(z))\\
\,&=K(x,\mu(y,\alpha(z)))=K(x,[y,z]),\quad\text{for all\ }\,x,y,z \in \g.
\endaligned
$$
{\bf(i)} We shall now prove that $K$ is non-degenerate if and only if
the Cartan-Killing form $\kappa$ is non-degenerate.
Suppose $\kappa$ is non-degenerate.
It then follows from \eqref{relacion K y cartan-killing} that $\alpha$ is invertible. 
Therefore, $K(\,\cdot\,,\,\cdot\,)=\kappa\left(\alpha^{-1}(\,\cdot)\,,\,\cdot\,\right)$ is non-degenerate.

\medskip
Conversely: suppose $K$ is non-degenerate. 
We claim that $\alpha$ is invertible. 
Let $x \in \Ker(\alpha)$ and take any $y \in \g$.
Since $\alpha$ is $\ad_{\mu}$-equivariant, 
$K(x,y)=\operatorname{Tr}\left(\ad_{\mu}(\alpha(x))\circ \ad_{\mu}(y)\right)=0$, for any $y\in\g$.
Since $K$ is non-degenerate, $x=0$, thus implying $\Ker(\alpha)=\{0\}$,
so that $\alpha$ is invertible. 
Finally, using \eqref{relacion K y cartan-killing}, 
we conclude that $\kappa$ is non-degenerate.

\medskip
{\bf (ii)} For what remains to be proved, use will be made of the following basic fact 
(see Humphreys, \cite{Hum}; {\bf Lemma 4.3}): Let $V$ be a finite dimensional
vector space over $\Bbb F$ and let $A$ and $B$ be two subspaces with $A\subset B$.
Set $M=\{X \in\frak{gl}(V)\,\mid\,[X,B]_{\frak{gl}(V)} \subset A\}$. 
{\sl If $X \in M$ satisfies $Tr(XY)=0$ for all $Y \in M$, then $X$ is nilpotent\/.}
We may use this result in the following context:
$$
V=\g,\quad A=\ad_{\mu}([\g,\g])\quad\text{and}\quad B=\ad_{\mu}(\g).
$$
Clearly, $A\subset B$. Also, by
\eqref{delta producto-3 otra vez},
it is clear that $\ad(\g) \subset M$ where,
$$
M=\{X \in \frak{gl}(\g)\,\mid\,[X,\ad_{\mu}(\g)]_{\frak{gl}(\g)} \subset 
\ad_{\mu}([\g,\g]\}.
$$
We now proceed with the proof of the stated items.

\medskip
Let $x,y \in\g$ and $Z \in M$. Using \eqref{delta producto-3 otra vez} 
we get,
$$
\aligned
\Tr(\ad_{\mu}([x,y]) \circ Z)
& =\Tr([\ad(x),\ad_{\mu}(y)]_{\frak{gl}(\g)} \circ Z)
\\
& =\Tr(\ad(x) \circ [\ad_{\mu}(y),Z]_{\frak{gl}(\g)})=0.
\endaligned
$$
Therefore, $\ad_{\mu}([x,y])=\ad(\mu(x,y))$ is nilpotent for all $x,y \in \g$.
This means that, $\mu(\g,\g) \subset \Nil(\g)$
---{\it the maximal nilpotent ideal of the Lie algebra $\g$\/.} 
Since $\alpha([x,y])=\alpha^2(\mu(x,y))=\mu(\alpha^2(x),y)
=[\alpha(x),y]$, we conclude that $\alpha$ is $\ad$-equivariant.
This implies that $\Nil(\g)$ is invariant under $\alpha$, 
so by \eqref{delta producto-3 otra vez}
we get, 
$[\g,\g] \subset \alpha_0\left(\mu(\g,\g)\right) \subset \Nil(\g)$ and
therefore, $\g$ is a solvable Lie algebra (see \cite{Hum}, {\bf Prop. 3.1}).

\medskip
{\bf (iii)} If $K=0$, then by \eqref{relacion K y cartan-killing}, 
we have that $\kappa=0$ and therefore $\g$ is a nilpotent Lie algebra.
We shall now prove that 
if $\g$ is a nilpotent Lie algebra, then $K=0$.
First observe that
\eqref{delta producto-3 otra vez}
states that $\alpha$ is 
$\ad$-equivariant and $\Ker(\alpha) \subset C(\g)$.
We claim that $\Ker(\alpha)$ is a Hom-Lie ideal of $\g$ and for this,
we only need to prove that $\mu(\Ker(\alpha),\g) \subset \Ker(\alpha)$.
Let $x \in \Ker(\alpha)$ and $y \in \g$. 
Since $\Ker(\alpha) \subset C(\g)$, using
\eqref{delta producto-3 otra vez} 
we get,
$\alpha(\mu(x,y))=[x,y]=0$, thus proving that $\mu(x,y) \in \Ker(\alpha)$.

\medskip
Now assume that $\g$ is a nilpotent Lie algebra. 
Then, $\kappa=0$ and by \eqref{relacion K y cartan-killing} it follows that  
$K\left(\alpha(\,\cdot\,),\,\cdot\,\right)=0$. 
If $\alpha$ is invertible, we conclude that $K=0$.
On the other hand, assume $\Ker(\alpha) \ne \{0\}$. 
We shall now proceed by induction on $\dim_{\F}\g$. 
Let ${\frak p}$ be a complementary vector subspace of $\Ker(\alpha)$ in $\g$,
so that, $\g={\frak p} \oplus \Ker(\alpha)$. 
Let $\mu_{\frak p}(x,y) \in {\frak p}$ be the component of $\mu(x,y)$ along ${\frak p}$.
Upon restriction of $x$ and $y$ to elements from ${\frak p}$ only,
one obtains a skew-symmetric bilinear map, $\mu_{\frak p}:{\frak p} \times {\frak p} \to {\frak p}$. 
Similarly, let $\alpha_{\frak p}(x)$ be the component along ${\frak p}$
of $\alpha(x)$. By restriction to ${\frak p}$ one obtains a linear map,
$\alpha_{\frak p}:{\frak p}\to{\frak p}$, and the triple
$({\frak p},\mu_{\frak p},\alpha_{\frak p})$ becomes a Hom-Lie algebra.

\medskip
In the same way, let $[x,y]_{\frak p}$ be the ${\frak p}$-component of the Lie bracket $[x,y]$,
for any $x,y \in {\frak p}$. Since $\Ker(\alpha)$ is a Hom-Lie ideal of $(\g,\mu,\alpha)$,
\begin{equation}\label{condicion para p}
\alpha_{\frak p}\left(\mu_{\frak p}(x,y)\right)=
\mu_{\frak p}\left(\alpha_{\frak p}(x),y\right)=
[x,y]_{\frak p},\quad \text{for all\ }x,y \in {\frak p}.
\end{equation}
Observe that $({\frak p},[\cdot,\cdot]_{\frak p})$ is a nilpotent Lie algebra, 
because ${\frak p}\simeq\g/\Ker(\alpha)$ and $\Ker(\alpha) \subset C(\g)$.
It clearly follows from \eqref{condicion para p}
that $\Ker(\alpha_{\mathfrak{p}}) \subset C(\mathfrak{p})$.

\medskip
Finally, let $x,y,z \in {\frak p}$. Since $\Ker(\alpha)$ is a Hom-Lie ideal of $\g$, we have,
$$
\aligned
\ad_{\mu}(x) \circ \ad_{\mu}(y) \circ \alpha(z)
&=\mu\left(x,\mu(y,\alpha(z))\right)
\\
& =\mu(x,[y,z]) =\mu\left(x,[y,z]_{\frak p}\right) +\Ker(\alpha)
\\
& = \mu_{\frak p}\left(x,[y,z]_{\frak p}\right) + \Ker(\alpha)
\\
&=\mu_{\frak p}\left(x,{\mu_{\frak p}\left(y,\alpha_{\frak p}(z)\right)}\right) + \Ker(\alpha_{\frak p})
\\
&=\ad_{\mu_{\frak p}}(x) \circ \ad_{\mu_{\frak p}}(y) \circ \alpha_{\frak p}(z) + \Ker(\alpha_{\frak p}).
\endaligned
$$
Let $K_{\frak p}:{\frak p} \times {\frak p} \to \Bbb F$
be the corresponding symmetric bilinear form on ${\frak p}$ given by:
$$
K_{\frak p}(x,y)=\operatorname{Tr}\left(\ad_{\mu_{\frak p}}(x) \circ 
\ad_{\mu_{\frak p}}(y) \circ \alpha_{\frak p}\right),\quad \text{for all\ }x,y \in {\frak p}.
$$
Then, for any $x$ and $y$ in ${\frak p}$,
$$
\aligned
K(x,y)
&=\operatorname{Tr}(\ad_{\mu}(x) \circ \ad_{\mu}(y) \circ \alpha)
\\
& = \operatorname{Tr}(\ad_{\mu_{\frak p}}(x) \circ \ad_{\mu_{\frak p}}(y) \circ \alpha_{\frak p})
 =K_{\frak p}(x,y).
\endaligned
$$
This proves that ${\frak p}$ satisfies the same conditions that $\g$ does.
By induction on $\dim_{\F}\g$, we conclude that $K_{\frak p}(x,y)=0$, for all $x,y \in {\frak p}$.
Since $\g={\frak p} \oplus \Ker(\alpha)$, it follows that $K(x,y)=0$ for all $x,y \in \g$.
\end{proof}

\medskip
One would like to have a result relating the
algebraic structure of a given Hom-Lie algebra $(\g,\mu,\alpha)$
in terms of some property of its associated bilinear form
$K$. The following result goes in this direction.
It states that if $(\g,\mu,\alpha)$ is nilpotent,
then the Lie algebra $(\g,[\,\cdot\,,\,\cdot\,])$ is nilpotent and therefore,
the Cartan-Killing-like bilinear form $K$ of $(\g,\mu,\alpha)$ is zero.
The converse, however, is not true.
If the twisted $K$ of $(\g,\mu,\alpha)$ is zero one cannot conclude that
the Hom-Lie algebra $(\g,\mu,\alpha)$ itself is nilpotent
(see \S5 below).

\medskip
\begin{Prop}\label{GilStatement}{\sl 
Let the hypotheses be as in {\bf Thm. \ref{Teorema Cartan-Killing}}. 
If the Hom-Lie algebra $(\g,\mu,\alpha)$ is nilpotent,
then the Lie algebra $(\g,[\,\cdot\,,\,\cdot\,])$ is nilpotent and therefore,
the Cartan-Killing-like bilinear form $K$ of $(\g,\mu,\alpha)$ is zero.
}
\end{Prop}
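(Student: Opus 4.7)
The plan is to exploit hypothesis \textbf{(B)}, which recasts each ordinary bracket as $[x,y]=\mu(\alpha(x),y)$, in order to translate nilpotency of $(\g,\mu,\alpha)$ into nilpotency of the underlying Lie algebra $(\g,[\,\cdot\,,\,\cdot\,])$. Once the latter is established, the vanishing of the Cartan-Killing form $\kappa$ combined with \textbf{Thm. \ref{Teorema Cartan-Killing}.(iii)} (used in the direction $\g$ nilpotent $\Rightarrow K=0$, whose proof there does not invoke the present proposition) will yield $K=0$.

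The key observation is that \textbf{(B)} is equivalent to the operator identity $\ad(x)=\ad_{\mu}(\alpha(x))$ for every $x\in\g$. An easy induction on $n$ then gives
$$
\ad(x_{1})\circ\ad(x_{2})\circ\cdots\circ\ad(x_{n-1})(x_{n})=
\mu\bigl(\alpha(x_{1}),\mu(\alpha(x_{2}),\ldots,\mu(\alpha(x_{n-1}),x_{n})\cdots)\bigr),
$$
for all $x_{1},\ldots,x_{n}\in\g$. Since each $\alpha(x_{i})$ still lies in $\g$, the right-hand side belongs to the $n$-th term $\g^{[n]}$ of the Hom-Lie descending central series, defined recursively by $\g^{[1]}=\g$ and $\g^{[k+1]}=\mu(\g,\g^{[k]})$.

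Nilpotency of the Hom-Lie algebra $(\g,\mu,\alpha)$ amounts to $\g^{[n]}=\{0\}$ for some $n$. Combined with the displayed identity, this forces $\ad(x_{1})\circ\cdots\circ\ad(x_{n-1})\equiv 0$ on $\g$ for every choice of $x_{1},\ldots,x_{n-1}$, which is precisely the statement that $(\g,[\,\cdot\,,\,\cdot\,])$ is nilpotent as an ordinary Lie algebra. Hence $\kappa\equiv 0$, and \textbf{Thm. \ref{Teorema Cartan-Killing}.(iii)} concludes $K=0$.

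The main thing to be careful about is pinning down the convention for what it means for a Hom-Lie algebra to be nilpotent, since several variants appear in the literature (vanishing of the descending series $\g^{[k]}$, vanishing of all left-normed iterated $\mu$-products of a given length, etc.). Any of the standard choices is compatible with the calculation above, so the heart of the argument is the one-line operator identity coming from \textbf{(B)}; no further delicate analysis is needed.
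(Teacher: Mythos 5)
Your proposal is correct and takes essentially the same approach as the paper: hypothesis \textbf{(B)} is used to rewrite iterated Lie brackets as iterated $\mu$-products (the paper pushes the twist to the innermost slot as $\alpha^{n}$, while you insert $\alpha$ in each left argument via $\ad(x)=\ad_{\mu}(\alpha(x))$ --- an immaterial difference), so that vanishing of the Hom-Lie descending series $\g_{\mu}^{m}$ forces vanishing of the Lie lower central series. The final step, invoking the implication ``$\g$ nilpotent $\Rightarrow K=0$'' from \textbf{Thm. \ref{Teorema Cartan-Killing}.(iii)}, matches the paper's conclusion and, as you note, is not circular.
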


\medskip
\begin{proof}
Let $(\g,\mu,\alpha)$ be a Hom-Lie algebra satisfying
\eqref{delta producto-3 otra vez}. 
Define recursively, $\g_{\mu}^{n+1}:=\mu(\g, \g_{\mu}^n)$, 
with $\g_{\mu}^2 :=\mu(\g, \g)$.
The Hom-Lie algebra $(\g,\mu,\alpha)$ is nilpotent if
there is some $m\in\Bbb N$ such that,
$\g_{\mu}^m = 0$. It is easy to see that,
$$
\alpha(\g_{\mu}^{n+1}) = \alpha\left(\mu(\g, \g_{\mu}^n)\right)= \mu\left(\g, \alpha(\g_{\mu}^n)\right).
$$
Therefore,
$$
\alpha(\g_{\mu}^{n+1}) = \mu\left(\g, \ldots, \mu{\left(\g, \alpha(\g)\right)}, \ldots \right).
$$
On the other hand, \eqref{delta producto-3 otra vez}.
states that
$\g^2 = \mu\left(\g, \alpha(\g)\right) = \alpha(\g_{\mu}^2)$. Thus,
$$
\aligned
\g^3 = [\g, \g^2] & = \alpha\left( \mu(\g, \g^2)\right)  \\
& = \alpha( \mu\left(\g, \mu{\left(\g, \alpha(\g)\right)}\right) \\
& = \mu (\g, \alpha \left(  \mu{\left(\g, \alpha(\g)\right)} \right) \\
& = \mu \left(\g, \mu {\left(\g, \alpha^2(\g) \right)} \right),
\endaligned
$$
where $\alpha^2=\alpha\circ\alpha$. One concludes that,
$\g^{n+1} = [\g, \g^n]$ is related to the Hom-Lie product $\mu$
and the twist map $\alpha$ as follows: 
$$
\g^{n+1} = \mu\left(\g, \ldots, \mu{\left(\g, \alpha^n(\g) \right)}, \ldots \right).
$$
Observe, however, that $\alpha(\g) \subset \g$ 
implies that $\alpha^2(\g) \subset \alpha(\g)$.
Thus, if $(\g, \mu, \alpha)$ is nilpotent, there is a minimal $m\in\Bbb N$
such that $\g_{\mu}^{m+1}=0$. Whence, $\alpha(\g_{\mu}^{m+1})=0$.
This implies that,
$$
\alpha(\g_{\mu}^{m+1}) = \mu(\g, \ldots, \mu(\g, \alpha(\g)) \ldots ) = 0.
$$
Using the fact that $\alpha^m(\g) \subset \alpha(\g)$ one concludes that,
$$
\g^{m+1} = \mu(\g, \ldots, \mu(\g, \alpha^m(\g) ), \ldots ) = 0.
$$
In particular, the Cartan-Killing form $\kappa$ of the Lie algebra
$(\g,[\,\cdot\,,\,\cdot\,])$ is zero. It then follows from {\bf (i)} in the proof of
 {\bf Thm. \ref{Teorema Cartan-Killing}} that $K=0$.
\end{proof}

\medskip
\section{Hom Lie algebras on quadratic central extensions with isotropic kernel}

\medskip
It was proved in \cite{GSV} that the study of any central extension
$V\hookrightarrow\g\twoheadrightarrow\g_0$
of a quadratic Lie algebra $\g_0$, with $\g$ carrying an invariant
metric itself, can be reduced
to the study of the following two extreme cases: either,
{\bf (1)} the kernel $V$ is isotropic or, 
{\bf (2)} it is non-degenerate (see \cite{GSV}, {\bf Prop 2.6}).
When $V$ is non-degenerate, the Lie algebra structure of 
$\g$ is that of the orthogonal direct sum of $\g_0$ and $V$ (see {\bf Prop. 2.5} in \cite{GSV}). 
The interesting case is when $V$ is isotropic.
In that case, the structure of $\g$ is obtained through the construction known as
{\bf double central extension} (see {\bf Thm. 3.3} in \cite{GSV}).

\medskip
We shall therefore assume that
$\g_0$ and $\g$ are quadratic Lie algebras with invariant metrics
$B_0$ and and $B$, respectively.
Observe that {\bf Prop. \ref{Prop 3}} states that $B_0$
is $\mu_0$-invariant. The natural question is whether or not $B$ is
$\mu$-invariant. Unfortunately, if it is, $\theta$ must be a coboundary
(see {\bf Lemma \ref{invariancia hom-lie}} below).
Nevertheless, in {\bf Lemma \ref{lema 31}} we shall use the Hom-Lie algebra
$(\g_0,\mu_0,\alpha_0)$ of {\bf Prop. \ref{Prop 3}}
to provide conditions that make $V$ isotropic for $B$
without implying (nor having to assume) that $\theta$ is a coboundary.
As a direct consequence, and taking the results of \cite{GSV} into account, it follows
that if $V$ is isotropic, there is an
invariant metric $\tilde{B}$ in $\g$
making isotropic a complement of $\operatorname{Im}(\alpha_0)$ in $\g_0$
(see {\bf Cor. \ref{a isotropico}} bellow).
\medskip

\begin{Lemma}\label{invariancia hom-lie}{\sl
Let $(\g_0,[\,\cdot\,,\,\cdot\,]_0)$ be a Lie algebra with
invariant metric $B_0:\g_0\times\g_0\to\Bbb F$.
Let $B:\g\times\g\to\Bbb F$ be an invariant metric for 
the central extension
$\g=\g_0\oplus V$
associated to the cocycle $\theta$.
Let $\mu$ be 
defined by $\mu(x,y)=h([x,y])$, for all $x,y \in \g$, 
with $h:\g \to \g_0$ as in \eqref{maps} of \textbf{Lemma \ref{lema 1}}. 
Then, $B\left(\mu(x,y),z\right)=B\left(x,\mu(y,z)\right)$ for all $x,y,z \in \g$
if and only if $\theta=0$.}
\end{Lemma}

\medskip
\begin{proof}
Let $D_1,\ldots, D_r\in \operatorname{Der}(\g)\cap {\frak o}(B_0)$
($r=\dim_{\F}V$) 
be the skew-symmetric derivations associated to the cocycle
$\theta$ as in  {\bf Prop. \ref{proposicion 1}}.
Using the definition of $h:\g \to \g_0$ (see \textbf{Lemma \ref{lema 1}})
together with the invariance and the non-degeneracy of both $B$ and $B_0$,
one proves that,
\begin{equation}\label{condicion para h}
h([x,y])=[h(x),y]_0+\sum_{i=1}^rB_0(x,v_i)D_i(y),
\end{equation}
which holds for all $x \in \g$ and $y \in \g_0$
(also see \textbf{Lemma 2.2} in \cite{GSV} for an alternative proof). 

\medskip
Suppose that $D_1=\cdots =D_r=0$. 
Then $\g_0$ is an ideal of $\g$, which is equivalent to $\theta=0$. 
Thus, the Lie bracket in $\g_0$ is the restriction 
of the Lie bracket on $\g$. 
By \eqref{condicion para h} it follows that 
$h([x,y])=h([x,y]_0)=[h(x),y]_0=[x,h(y)]_0$, for all $x,y \in \g$. 
Since $B(h(x),y)=B(x,h(y))$, for all $x,y \in \g$ 
(see \textbf{Lemma \ref{lema 1}}), we conclude that,
$$
\aligned
B(\mu(x,y),z)&=B([x,y],h(z))=B(x,[y,h(z)])
\\
&=B(x,[y,h(z)]_0)=B(x,h([y,z]))=B(x,\mu(y,z)).
\endaligned
$$
Conversely, suppose $B(\mu(x,y),z)=B(x,\mu(y,z))$, for all $x,y,z \in \g$.
In particular, take $x=a_i+w_i \in \Ker(h)=\g^{\perp}$ ($1\le i\le r$)
 (see \textbf{Lemma \ref{lema 2}}). 
Since $\mu(y,z) \in \g_0$, for all $y,z \in \g$, we have,
$B\left(\mu(a_i+w_i,y),z\right)=B\left(a_i+w_i,\mu(y,z)\right)=0$. 
But $D_i(y)=\mu(a_i+w_i,y)$, for all $y \in \g_0$. Then $B(D_i(y),z)=0$, 
for all $y \in \g_0$ and for all $z \in \g$.
Since $B$ is non-degenerate, it follows that $D_i=0$, for all $1 \leq i \leq r$.
\end{proof}

\medskip
The following statements 
are straightforward consequences of {\bf Lemma \ref{lema 1}},
{\bf Lemma \ref{lema 2}} and {\bf Prop. \ref{Prop 3}};
they are also proved in \cite{GSV}
(see \textbf{Lemma 2.7} and \textbf{Lemma 2.9} therein).
We need them all for the proof of
\textbf{Thm. \ref{teorema}} below.

\medskip
\begin{Lemma}\label{lema 31}{\sl
Let the hypotheses be as in {\bf Prop. \ref{Prop 3}} above.
Let $V$ be an isotropic ideal of $\g$, and let
$\g_0=\Im(\alpha_0)+{\frak a}$, 
with  
$\alpha_0=\pi\circ k:\g_0\to\g_0$ as in {\bf Prop. \ref{Prop 3}}
and 
${\frak a}=\span_{\F}\{a_1,\ldots,a_r\}$ as in 
{\bf Lemma \ref{lema 2}}.
Let $\{v_1,\dots, v_r\}$  be the basis of $V$ used in {\bf Lemma \ref{lema 1}}
and let $h:\g\to\g_0$ be the map defined therein.
Then,

\begin{itemize}

\item[(i)] $\{a_1,\ldots,a_r\}$ y $\{h(v_1),\ldots,h(v_r)\}$ are both linearly independent sets.

\item[(ii)] $h(\a) \subset h(V)=\Ker(\alpha_0)$, 
$r=\dim_{\F}\Ker(\alpha_0)$ and $h\vert_V:V \rightarrow \Ker(\alpha_0)$ is bijective.

\item[(iii)] $B\left(\Im(\alpha_0),V\right)=\{0\}$ and $\alpha_0=\alpha_0 \circ h \circ \alpha_0$.

\item[(iv)] $E=\alpha_0 \circ (h\vert_{\g_0})$ is a projection ({\it ie\/,} $E^2=E$) 
and the decomposition of $\g_0$ associated to $E$ is $\g_0=\Im(\alpha_0) \oplus \a$.

\item[(v)] $F=h \circ \alpha_0$ is a projection ({\it ie\/,} $F^2=F$) and the decomposition of
$\g_0$ associated to $F$ is $\g_0=\Ker(\alpha_0) \oplus
\a^{\perp}$, where $\Ker(\alpha_0)=\Ker(F)$. Moreover, for each $x \in \g_0$,
\begin{equation}\label{l1}
x=F(x)+\sum_{i=1}^rB_0(a_i,x)\,h(v_i).
\end{equation}
\end{itemize}

In fact, the subspace $\a^{\perp}$ 
has the structure of a quadratic Lie algebra, such that,
$\g_0=\Ker(\alpha_0)\oplus \frak {a}^{\perp}$ is a central extension of
$\frak {a}^{\perp}$ by $\Ker(\alpha_0)$.
}
\end{Lemma}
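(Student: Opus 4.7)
The plan is to deduce the five items (i)--(v) as straightforward computational consequences of a single pivotal identity,
\begin{equation*}
B_0(h(v_i), a_j) = \delta_{ij},
\end{equation*}
and then to derive the final structural assertion from the centrality of $\Ker(\alpha_0)$ together with the double-extension machinery of \cite{GSV}. The pivotal identity itself arises by combining Lemma \ref{lema 2}(i) with the isotropy of $V$ (so $B(w_i, v_j) = 0$, whence $B(a_i, v_j) = \delta_{ij}$) and Lemma \ref{lema 1}(ii) (which rewrites this as $B_0(h(v_i), a_j) = B(v_i, a_j) = \delta_{ji}$). Linear independence of both $\{a_i\}$ and $\{h(v_i)\}$ in (i) is immediate from this perfect pairing.

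For (ii) I would apply Lemma \ref{lema 2}(vi) to the pairs $(x,v) = (0, v_j)$ and $(a_j, 0)$: isotropy kills the sum in the first case, forcing $T(h(v_j)) = 0$, while $B(a_j, v_i) = \delta_{ji}$ does the same in the second, giving $h(V), h(\a) \subset \Ker(\alpha_0)$; a dimension count closes (ii), since $\dim h(V) = r$ by (i), while the hypothesis $\g_0 = \Im(\alpha_0) + \a$ forces $\dim \Ker(\alpha_0) \le r$. For (iii), Lemma \ref{lema 1}(ii) gives $B(k(x), v_i) = 0$, and the $V$-component $R(x)$ of $k(x)$ is isotropic against $v_i$, so $B(\Im(\alpha_0), V) = 0$; feeding $T(x)$ back into Lemma \ref{lema 2}(vi) makes the summation vanish, yielding $\alpha_0 = \alpha_0 \circ h \circ \alpha_0$. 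Items (iv) and (v) then fall out directly: $E^2 = E$ and $F^2 = F$ are immediate from (iii); the identifications $\Im E = \Im \alpha_0$, $\Ker E = \a$, $\Ker F = \Ker \alpha_0$ (via $T \circ F = T$), and $\Im F = \a^\perp$ follow from dimension counts plus the computation
\begin{equation*}
B_0(h(T(x)), a_j) = B(T(x), a_j + w_j) - B(T(x), w_j) = 0,
\end{equation*}
which uses $a_j + w_j \in \g_0^\perp = \Ker(h)$ and $B(\Im(\alpha_0), V) = 0$. The formula \eqref{l1} then drops out by decomposing $x = F(x) + (x - F(x))$ along the direct sum in (v) and reading off the coefficients in the basis $\{h(v_i)\}$ via $B_0(h(v_i), a_j) = \delta_{ij}$.

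For the concluding assertion, Lemma \ref{lema 2}(ii) shows that $\Ker(\alpha_0) \subset C(\g_0)$ is a central ideal, so the direct-sum decomposition (v) identifies $\a^\perp$ with $\g_0/\Ker(\alpha_0)$ as vector spaces and transports the quotient Lie bracket, realizing $\g_0$ as a central extension of $\a^\perp$ by $\Ker(\alpha_0)$. The quadratic structure on $\a^\perp$ is the main obstacle: since $B_0$ does not in general descend to the quotient, one cannot merely restrict $B_0$ to $\a^\perp$, and the non-degenerate invariant form must instead be produced via the double-extension construction of \cite{GSV} (particularly Thm.~3.3), where the isotropy of $V$ is used decisively. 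The orthogonal data collected in (i)--(v) are precisely what that construction requires.
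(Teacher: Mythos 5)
Your derivations of items (i)--(v) are correct, and they are essentially the verification the paper leaves implicit (its proof is a one-line appeal to Lemmas \ref{lema 1}, \ref{lema 2} and Prop.\ \ref{Prop 3}): the pairing $B_0(h(v_i),a_j)=\delta_{ij}$ obtained from Lemma \ref{lema 2}.(i) plus isotropy, the two specializations of Lemma \ref{lema 2}.(vi), the identity $\alpha_0=\alpha_0\circ h\circ\alpha_0$, and the ensuing dimension counts are exactly the right ingredients, and your proofs of (iv), (v) and of \eqref{l1} from them are sound.

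The gap is in the final structural assertion. The central-extension half of your argument is fine: $\Ker(\alpha_0)\subset C(\g_0)$ by Lemma \ref{lema 2}.(ii), and (v) lets you write $[x,y]_0=F([x,y]_0)+(\operatorname{Id}-F)([x,y]_0)$ for $x,y\in\a^{\perp}$, so that the second summand is a $\Ker(\alpha_0)$-valued $2$-cocycle for the transported bracket. But the quadratic structure on $\a^{\perp}$, which is part of the statement, is never actually produced. You rightly observe that restricting $B_0$ to $\a^{\perp}$ can fail (when $\a$ is isotropic, as in the example of \S 5, one has $\a\subset\a^{\perp}$), yet the appeal to the double-extension theorem of \cite{GSV} is an assertion, not an argument: that theorem describes the big algebra $\g$ as a double central extension when $V$ is isotropic; it does not, as stated, hand you a non-degenerate invariant form on $\a^{\perp}$ for the bracket $F([\,\cdot\,,\,\cdot\,]_0)$, and invoking it would still require identifying $\a^{\perp}$ with $V^{\perp}/V$ and checking compatibility of bracket and form with the decomposition in (v), none of which you carry out. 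The fix is elementary and uses only what you already proved: set $\tilde{B}_0(x,y)=B_0(\alpha_0(x),y)$ for $x,y\in\a^{\perp}$ (equivalently $B(k(x),k(y))$, by Lemma \ref{lema 1}.(ii)). It is symmetric because $\alpha_0=T\in\Gamma_{B_0}(\g_0)$; it is non-degenerate on $\a^{\perp}$ because $\tilde{B}_0(x,\a^{\perp})=0$ forces $\alpha_0(x)\in\Im(\alpha_0)\cap(\a^{\perp})^{\perp}=\Im(\alpha_0)\cap\a=\{0\}$ by (iv), hence $x\in\Ker(\alpha_0)\cap\a^{\perp}=\{0\}$ by (v); and it is invariant for the transported bracket because $\alpha_0\circ F=\alpha_0$ (from (iii)), $\alpha_0$ is $\ad$-equivariant, $B_0$ is invariant, and $B_0(\alpha_0(x),u)=B_0(x,\alpha_0(u))=0$ for $u\in\Ker(\alpha_0)$, so the $\Ker(\alpha_0)$-components of brackets drop out of all pairings.
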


\medskip
\begin{Cor}\label{a isotropico}{\sl
Let the hypotheses be as in {\bf Lemma \ref{lema 31}}.
If $V$ is isotropic, there is an invariant metric 
$\tilde{B}$ on $\g$
for which $\a\subset\g$ is an isotropic subspace.}
\end{Cor}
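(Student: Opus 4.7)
The plan is to construct an element $T\in\Gamma_B^0(\g)$ of the centroid such that $\tilde B(\,\cdot\,,\,\cdot\,):=B(T(\,\cdot\,),\,\cdot\,)$ is an invariant metric on $\g$ for which $\a$ becomes isotropic. By the characterization of invariant metrics on a quadratic Lie algebra recalled in \S1, producing such a $T$ is both necessary and sufficient; so the task reduces to finding an invertible, $B$-symmetric, $\ad$-equivariant $T$ satisfying $B(T(a_i),a_j)=0$ for all $1\le i,j\le r$.

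First I would record the orthogonality pattern forced by the isotropy of $V$. By {\bf Lemma \ref{lema 2}.(i)} one has $B(a_i+w_i,v_j)=\delta_{ij}$, and since $V$ is isotropic $B(w_i,v_j)=0$, so $B(a_i,v_j)=\delta_{ij}$. This already implies $\a\cap V^{\perp}=\{0\}$, giving the vector-space decomposition $\g=V^{\perp}\oplus \a$. Moreover, the invariance of $B$ together with $V\subset C(\g)$ yields $[\g,\g]\subset V^{\perp}$.

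The core step is then to take $T$ of the form $T=\Id_\g+\lambda$, where the linear map $\lambda:\g\to V$ is prescribed by $\lambda\vert_{V^{\perp}}=0$ and $\lambda(a_i)=-\sum_{j}B(a_i,a_j)\,v_j$ on the complement $\a$. The formula for $\lambda$ on $\a$ is engineered precisely to cancel the quadratic values $B(a_i,a_j)$ in $B(T(a_i),a_j)$, exploiting the duality $B(v_k,a_j)=\delta_{kj}$ established above. The remaining verifications are routine: $\lambda(\g)\subset V\subset V^{\perp}\subset\ker\lambda$ forces $\lambda^2=0$, so $T$ is invertible; $\lambda([\g,\g])=\{0\}$ together with $\lambda(x)\in V\subset C(\g)$ gives $T\circ\ad(x)=\ad(T(x))$; and the $B$-symmetry of $T$ reduces, since $\lambda(x)$ and $\lambda(y)$ lie in the isotropic subspace $V$, to the symmetry of the matrix $(B(a_i,a_j))_{i,j}$, which is automatic.

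The only non-routine conceptual point, and what I would flag as the main obstacle, is recognizing that the isotropy of the central ideal $V$ is exactly what makes the centroid $\Gamma_B(\g)$ rich enough to accommodate a shift that kills the off-diagonal pattern $B(a_i,a_j)$ without spoiling non-degeneracy or invariance. Once the pairing $B(a_i,v_j)=\delta_{ij}$ has been extracted and the correct form of $\lambda$ identified, the rest of the argument is a direct calculation.
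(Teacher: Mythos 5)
Your construction is correct, and it is genuinely different in character from what the paper does: the paper offers no explicit proof of this corollary at all, presenting it as a direct consequence of Lemma~\ref{lema 31} and of results imported from \cite{GSV} (the subsequent remark leans on Lemma~2.11 of \cite{GSV}, which identifies the isotropy of ${\frak a}$ with the condition $\Ker(h)={\frak a}$), whereas you produce the metric by hand inside the present framework. Your key steps all check out: the isotropy of $V$ turns Lemma~\ref{lema 2}.(i) into the duality $B(a_i,v_j)=\delta_{ij}$, which gives both the linear independence of the $a_i$ and the splitting $\frak{g}=V^{\perp}\oplus{\frak a}$; the map $\lambda$ with $\lambda\vert_{V^\perp}=0$, $\lambda(a_i)=-\sum_j B(a_i,a_j)v_j$ satisfies $\lambda^2=0$ (so $\operatorname{Id}+\lambda$ is invertible), is $\ad$-equivariant because $[\frak{g},\frak{g}]\subset V^{\perp}$ and $\lambda(\frak{g})\subset V\subset C(\frak{g})$, and is $B$-symmetric because the cross terms $B(\lambda({\frak a}),V^\perp)$ vanish and the matrix $\left(B(a_i,a_j)\right)$ is symmetric; finally $\tilde B(a_i,a_j)=B(a_i,a_j)-B(a_i,a_j)=0$, and the centroid characterization recalled in \S1 guarantees $\tilde B$ is an invariant metric. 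What your route buys is a self-contained, explicit formula (which moreover perturbs $B$ only on the ${\frak a}\times{\frak a}$ block, so $V$ stays isotropic and $B$ is unchanged on $V^\perp$), at the cost of redoing work the paper delegates to \cite{GSV}; the paper's route buys brevity but leaves the reader to chase the external reference. One cosmetic caveat: rename your centroid element, since $T$ already denotes $\pi\circ k=\alpha_0$ in this paper.
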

\medskip

\begin{Remark}{\rm
From now on, we may assume that ${\frak a}$ is an isotropic subspace of $\g$ which,
according to {\bf Lemma 2.11} in \cite{GSV}, is equivalent to state that $\Ker(h)={\frak a}$.
Having this in mind, we may now describe in better detail the structure
of the Hom-Lie algebra $(\g,\mu,\alpha^{\prime})$ given in {\bf Cor. \ref{corolario}}.
}
\end{Remark}
\medskip

\begin{Theorem}\label{teorema}{\sl
{\bf (A)} Let $(\g_0,[\,\cdot\,,\,\cdot\,]_0,B_0)$ be a finite dimensional 
non-Abelian quadratic Lie algebra over an 
algebraically closed field $\F$ of characteristic zero. 
Let $V$ be a finite dimensional vector space over $\F$ and let 
$(\g,[\,\cdot\,,\,\cdot\,])$ be a central extension of $\g_0$ by $V$
associated to a given the 2-cocycle $\theta:\g_0 \times \g_0 \to V$.
Suppose that there exists an invariant metric $B$ on $\g$
and let $(\g,\mu,\alpha^{\prime})$ be the Hom-Lie algebra
defined in {\bf Cor. \ref{corolario}}.
Let $h:\g \to \g_0$ be as in {\bf Lemma \ref{lema 1}}.
There is a bilinear map $\g\times\g\to\g$, 
denoted by $(x,y) \mapsto xy$, such that,
\begin{itemize}
\item[(i)] $\mu(x,y)=xy-yx$,
\smallskip
\item[(ii)] 
$B(xy,z)+B(y,xz)=0$,
\smallskip
\item[(iii)] $2\,xy=\mu(x,y)-[h(x),y]+[x,h(y)]$, for all $x,y,z \in \g$.
\smallskip
\item[(iv)] $x^{2}=0$ for all $x \in  \Ker(h) \cup \Im(\alpha^{\prime})$, 
where $\g=\Ker(h) \oplus \Im(\alpha^{\prime})$. 
In addition, for any $y=a+\alpha^{\prime}(x)\in \g$, 
with $a \in \Ker(h)$ and $x \in \g$, 
$$
y^2=(a+\alpha^{\prime}(x))^2=[a,x] \in \Im(\alpha^{\prime}).
$$
Therefore, $y^2y^2=0$ for all $y \in \g$.
\smallskip
\item[(v)] $xy=-yx$ for all $x,y \in \g$ if and only if 
$\theta=0$.
\end{itemize}

\medskip
{\bf (B)} Let $V$ be an isotropic ideal of $\g$ and assume $\theta$ is not a coboundary.
Let ${\frak G}=\{(\xi,x)\mid \xi \in \F,\,x \in \g\}$
and define the following multiplication map in it:
$$
\nu\left(\,(\xi,x)\,,\,(\eta,y)\,\right)=(\,\xi \eta+B(x,y)\,,\,\xi y+\eta x+x y\,).
$$
Then, ${\frak G}$ is an algebra with unit element $1_{\frak G}=(1,0)$
having no non-trivial two-sided ideals.
Let $\tilde{\nu}$ be the induced multiplication map on the quotient 
$\widetilde{\frak G} = {\frak G}/\Bbb F\,1_{\frak G}$ and
let $[\,\cdot\,,\,\cdot\,]_{\tilde{\nu}}$ 
be the commutator induced on $\widetilde{\frak G}\simeq\g$ by
the commutator defined by $\nu$ in ${\frak G}$:
$$
[\,(\xi,x)\,,\,(\eta,y)\,]_{\nu}=\nu\left(\,(\xi,x)\,,\,(\eta,y)\,\right)-\nu\left(\,(\eta,y)\,,\,(\xi,x)\,\right).
$$
If $\tilde{\alpha}$ is the linear map
induced on $\widetilde{\frak G}$ by the map 
$(\xi,x)\mapsto\left(\xi,\alpha^\prime(x)\right)$
on ${\frak G}$,
then $(\widetilde{\frak G},[\,\cdot\,,\,\cdot\,]_{\tilde{\nu}},\tilde{\alpha})$ 
is a Hom-Lie algebra isomorphic to $(\g,\mu,\alpha^\prime)$.
Moreover, if $\g_0$ is a nilpotent Lie algebra,
then $({\frak G},\nu)$ is simple\/.}
\end{Theorem}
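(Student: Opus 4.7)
The natural definition for Part (A) is to take item (iii) as the definition of the product, setting
$$xy\;:=\;\tfrac{1}{2}\bigl(\mu(x,y)-[h(x),y]+[x,h(y)]\bigr).$$
Item (iii) is then tautological, and (i) is immediate: swapping $x\leftrightarrow y$ makes the two $h$-bracket terms cancel against each other while $\mu(x,y)-\mu(y,x)=2\mu(x,y)$. The substance lies in (ii). Expanding $B(xy,z)+B(y,xz)$ yields six terms, which I would process in two stages. First, the pair carrying $h(x)$ cancels by $B$-invariance of the Lie bracket alone, since $\ad h(x)$ is $B$-skew. Second, after a single use of invariance to move brackets, the remaining four terms collapse into $B_0$-pairings on $\g_0$ via repeated application of $B(w,u)=B_0(h(w),u)$ from {\bf Lemma \ref{lema 1}}.(ii); each such $B_0$-pairing appears with opposite signs in the resulting sum, so the total vanishes.

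For (iv), $x\in\Ker(h)$ gives $xx=\tfrac{1}{2}\mu(x,x)=0$. For $x=\alpha^\prime(w)=k(\pi(w))$, {\bf Lemma \ref{lema 1}}.(i) gives $h(x)=\pi(w)$, and the two bracket terms in (iii) evaluate to $\pm[\pi(w),k(\pi(w))]$, which vanishes by {\bf Lemma \ref{lema 1}}.(v) applied to $[\pi(w),\pi(w)]_0=0$. The splitting $\g=\Ker(h)\oplus\Im(\alpha^\prime)$ is {\bf Lemma \ref{lema 1}}.(iv) once one observes $\Im(\alpha^\prime)=\Im(k)$. For $y=a+\alpha^\prime(x)$, the pure squares vanish and the cross terms evaluate, via the formula in (iii) and \eqref{delta producto-1}, to $[a,\pi(x)]=[a,x]\in\Im(\alpha^\prime)$, whence $y^4=(y^2)^2=0$. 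For (v), direct calculation yields $xy+yx=-[h(x),y]+[x,h(y)]$; this automatically vanishes on $\Im(k)\times\Im(k)$ by {\bf Lemma \ref{lema 1}}.(v), while restricting to $x\in\Ker(h)$ reduces the global vanishing condition to $[\Ker(h),\g_0]=0$ in $\g$. Expanding over the basis $\{a_i+w_i\}$ of $\g_0^\perp$ from {\bf Lemma \ref{lema 2}} forces each $a_i\in C(\g_0)$ and $\theta(a_i,\g_0)=0$; the latter places every $a_i$ in $\bigcap_j\Ker(D_j)$, and {\bf Prop. \ref{perfect hom-Lie algebra}} then forces $a_i=0$, whence every $D_i=0$ by {\bf Prop. \ref{Prop 3}}.(iv) and $\theta=0$ by {\bf Prop. \ref{proposicion 1}}. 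The converse is the parallel computation that $\theta=0$ collapses both $h$-bracket obstructions.

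For Part (B), the unit property $\nu(1_{\frak G},(\eta,y))=(\eta,y)=\nu((\eta,y),1_{\frak G})$ is a one-line check. The commutator simplifies strikingly:
$$[(\xi,x),(\eta,y)]_\nu=\bigl(B(x,y)-B(y,x),\,xy-yx\bigr)=(0,\mu(x,y)),$$
using the symmetry of $B$ and Part (A)(i). Hence every commutator lies in $\{0\}\oplus\g$ and descends to $\widetilde{\frak G}=\frak G/\F\,1_{\frak G}$; the canonical projection identifies $\widetilde{\frak G}$ with $\g$, transports the induced commutator to $\mu$, and sends the lift $(\xi,x)\mapsto(\xi,\alpha^\prime(x))$ to $\alpha^\prime$. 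This yields the claimed Hom-Lie isomorphism.

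The main obstacle is the simplicity assertion. For a non-zero two-sided ideal $I\subset\frak G$, I would first rule out $I\subset\{0\}\oplus\g$: if $(0,x)\in I$ with $x\ne 0$ then $\nu((0,y),(0,x))=(B(y,x),yx)\in I$ would force $B(y,x)=0$ for every $y$, violating non-degeneracy of $B$. Hence $I$ contains some $(1,x_0)$, and the commutator relation produces $(0,\mu(x_0,y))\in I$ for all $y\in\g$, so $S:=\{z\in\g:(0,z)\in I\}$ is a $\mu$-ideal of $\g$ containing $\mu(x_0,\g)$. Since $\theta$ is not a coboundary, {\bf Prop. \ref{perfect hom-Lie algebra}} gives $C_\mu(\g)=V$ and $\mu(\g_0,\g_0)=\g_0$; after possibly replacing $x_0$ by $x_0+a$ for a suitable $a\in\Ker(h)$ (whose existence is ensured by $V$ isotropic and $\theta\ne 0$ via {\bf Lemma \ref{lema 31}}), one arranges $\g_0\subset S$. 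Applying $\nu((0,y),(0,z))=(B(y,z),yz)$ with $z\in\g_0\subset S$ and $y$ chosen so that $B(y,z)\ne 0$ (available by non-degeneracy of $B|_{\g\times\g_0}$, which follows from {\bf Lemma \ref{lema 1}}.(ii) and non-degeneracy of $B_0$) produces an element of $I$ with non-zero scalar part. Subtracting the appropriate multiple of $(1,x_0)$ returns $(0,yz-B(y,z)x_0)\in I$, and iterating this mechanism while exploiting the nilpotency of $(\g_0,[\,\cdot\,,\,\cdot\,]_0)$---which through {\bf Prop. \ref{GilStatement}} and the centroid-like properties of $\alpha^\prime$ controls the iterated $\mu$-products stabilising in $S$---eventually collapses the $\g$-coordinate and delivers a non-zero scalar multiple of $1_{\frak G}$ inside $I$. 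Hence $1_{\frak G}\in I$ and $I=\frak G$.
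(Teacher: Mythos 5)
Your Part (A) is essentially sound and enters the problem from a legitimately different door than the paper: you take the explicit formula (iii) as the definition of $xy$, whereas the paper defines $xy$ implicitly through $2B(xy,z)=B(\mu(x,y),z)+B(\mu(z,x),y)+B(\mu(z,y),x)$ and then derives (i)--(iii); since $B$ is non-degenerate the two definitions agree, and your six-term cancellation for (ii) does check out using invariance of $B$ and {\bf Lemma \ref{lema 1}}.(ii). The one flaw in (A) is in (v): to conclude $a_i=0$ you invoke {\bf Prop. \ref{perfect hom-Lie algebra}}, whose standing hypothesis is that some $D_i$ is not inner --- exactly what you cannot assume when the goal is $\theta=0$; read as a contradiction argument, your chain only yields that $\theta$ is a coboundary, not that it vanishes. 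The repair is immediate and does not need $a_i=0$: from $[a_i,\g_0]=\{0\}$ one gets $D_i=\mu(a_i,\,\cdot\,)=h([a_i,\,\cdot\,])=0$ directly by {\bf Prop. \ref{Prop 3}}.(iv), hence $\theta=0$; this is in substance the paper's {\bf Lemma \ref{invariancia hom-lie}}, which is how the paper settles (v).

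Part (B) is where the proposal genuinely fails. The two pivotal steps are asserted, not proved: first, ``after possibly replacing $x_0$ by $x_0+a$ \dots one arranges $\g_0\subset S$'' is unjustified --- replacing $x_0$ by $x_0+a$ inside $I$ would require $(0,a)\in I$, which you do not have, and nothing at that stage rules out $x_0\in V=C_\mu(\g)$, in which case $\mu(x_0,\g)=\{0\}$ and the commutator gives you nothing; second, the concluding ``iterating this mechanism \dots eventually collapses the $\g$-coordinate'' is precisely the heart of the matter and no mechanism is exhibited (note that $S$ is closed under $\mu$ but not under the product $xy$, because $\nu((0,y),(0,z))$ carries the scalar component $B(y,z)$). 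You also deploy nilpotency of $\g_0$ to attack the two-sided statement, but the theorem asserts the absence of non-trivial two-sided ideals already under ``$V$ isotropic, $\theta$ not a coboundary, $\g_0$ non-Abelian''; nilpotency is the hypothesis only for the further claim that $({\frak G},\nu)$ is simple, which in the paper means controlling one-sided ideals --- a case your proposal never treats. For comparison, the paper shows a nonzero two-sided ideal contains an element $(\xi,\alpha^{\prime}(x))$, multiplies it by $(-\xi,\alpha^{\prime}(x))$ to place the scalar $(-\xi^2+B(\alpha^{\prime}(x),\alpha^{\prime}(x)),0)$ in $I$, and, when that scalar vanishes, polarizes the identity $B(\alpha^{\prime}(x),\alpha^{\prime}(y))^2=\xi^2B(\alpha^{\prime}(x),\alpha^{\prime}(y))$, using the $B$-symmetry and centroid property of $\alpha^{\prime}$ together with $\alpha_0(\mu(x,y))=[x,y]$, to force $\alpha_0=0$ and contradict {\bf Cor. \ref{corolario zero}}; nilpotency enters only in the right- and left-ideal argument, via the relation $4\xi^2\ad_0(x)(y)=\ad_0(x)^3(y)$. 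As it stands, your Part (B) proves neither of the two assertions of the theorem.
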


\medskip
\begin{Remark}{\rm
Notice that the statement actually says that 
$({\frak G},{\nu})$ is a simple algebra, while the algebra 
$({\frak G},[\,\cdot\,,\,\cdot\,]_{\nu})$ defined by the commutator
is not simple
because the center $\F (1,0)$ is obviously an ideal.
Also note that the multiplication map $\tilde{\nu}$ induced by
$\nu$ on the quotient ${\frak G}/\Bbb F\,1_{\frak G}\simeq\g$
is neither symmetric nor skew-symmetric.
Finally, observe that
property {\bf (i)} in statement {\bf (A)} implies that the bilinear map
$(x,y) \mapsto xy$, somehow behaves like a connection adapted to $\mu$.
}
\end{Remark}

\medskip
\begin{proof}
The bilinear form $\g\times\g\ni (x,y)\mapsto xy\in\g$ can be defined
with the help of the invariant metric $B$ on $\g$
throughout the following identity that is to hold true for any triple
$x,y,z,\in\g$:
\begin{equation}\label{0}
2B(x y,z)=B(\mu(x,y),z)+B(\mu(z,x),y)+B(\mu(z,y),x).
\end{equation}
Once defined $xy$ this way, one may prove
statements {\bf (i)}, {\bf (ii)} and {\bf (iii)} in a straightforward way.
Notice in particular that {\bf (iii)} follows from the definition $\mu(x,y)=h([x,y])$,
{\bf Lemma \ref{lema 1}} and the invariance of $B$.
In particular, $xy=-yx$ for all $x,y \in \g$ if and only if $\mu(x,y)=2xy$,
which in turn implies that $B$ is invariant for $\mu$ and 
this is equivalent to the fact that $D_1=\cdots=D_r=0$; {\it ie\/,} $\theta=0$
(see {\bf Prop. \ref{invariancia hom-lie}}). Thus, {\bf (v)} also follows.

\medskip
To prove {\bf (iv)} we use {\bf Lemma \ref{lema 2}} and the fact that $V \subset C(\g)$, 
to conclude that, for all $x$, $y$ in $\g_0$,
$a$, $a^\prime$ in $\a=\Ker(h)$ and $u$, $v$ in $V$, 
$$
2\,\alpha^{\prime}(x+u)\,\alpha^{\prime}(y+v)=\alpha_0([x,y]_0),
\quad
\text{and,}
\quad
2\,a\,a^\prime = \mu(a,a^\prime).
$$
It follows that for any $x \in \Im(\alpha^{\prime}) \cup \Ker(h)$, $x^2=0$.
On the other hand, let $a \in \Ker(h)$ and $x \in \g_0$.
Using {\bf Lemma \ref{lema 2}} \eqref{tc1}, we obtain,
$$
\aligned
2a\,\alpha^{\prime}(x)&=\mu\left(a,\alpha^{\prime}(x)\right)
-[h(a),\alpha^{\prime}(x)]+[a,h(\alpha^{\prime}(x))]\\
\,&=h([a,k(x)])+[a,h(k(x))]=h(k([a,x]_0))+[a,x]\\
\,&=[a,x]_0+[a,x]=2[a,x]_0+\theta(a,x).
\endaligned
$$
Similarly,
$$
\aligned
2\alpha^\prime(x)\,a&=\mu(\alpha^{\prime}(x),a)-[h(\alpha^{\prime}(x)),a]+[\alpha^{\prime}(x),h(a)]\\
\,&=h([k(x),a])-[h(k(x)),a]\\
\,&=h(k([x,a]_0))-[x,a]=[x,a]_0-[x,a]\\
\,&=-\theta(x,a)=\theta(a,x).
\endaligned
$$
Combining these two expressions, we obtain,
$$
2\,\left(\,a\,\alpha^{\prime}(x)+\alpha^{\prime}(x)\,a\,\right)=2[a,x]_0+2\theta(a,x)=
2\,[a,x].
$$
Since $\alpha^{\prime}(x)^2=a^2=0$, we finally obtain:
$$
\left(a+\alpha^{\prime}(x)\right)^2
\!\!=[a,x] \in \Im(k)=\Im(\alpha^{\prime}),\ \ \text{for all\ }a\in \Ker(h),\ \text{and}\  x \in \g_0.
$$
Therefore, $(a+\alpha^{\prime}(x))^4=0$. 

\medskip
We shall now prove statement {\bf (B)}.
Assume $V$ is isotropic and $\theta$ is not a coboundary.
In particular, this implies that $\Ker(\alpha_0)=C(\g_0)$ and $\Im(\alpha_0)=[\g_0,\g_0]_0$
(see {\bf Prop. \ref{perfect hom-Lie algebra}.(iii)}). 
Let 
${\frak G}$ be as in the statement and
define equality, addition and multiplication by scalars
in ${\frak G}$ in the obvious manner.
Let $\nu$ and $\tilde{\nu}$ be the multiplications given in the statement.
If $[\!\![\,\cdot\,]\!\!]:{\frak G}\to\widetilde{\frak G}={\frak G}/{\Bbb F}1_{{\frak G}}$
is the projection map onto the quotient, then,
$$
\tilde{\nu}\left(\,[\!\![(\xi,x)]\!\!],[\!\![(\eta,y)]\!\!]\,\right)=
[\!\![ \,\nu\left((\xi,x)\,,\,(\eta,y)\right)\, ]\!\!]
$$
Since $\theta$ is not a coboundary,
{\bf Lemma \ref{lema 2}.(iv)} makes us conclude that this multiplication is neither
commutative (symmetric) nor anti-commutaive (skew-symmetric).

\medskip
We claim that any non-trivial two-sided ideal in ${\frak G}$
contains the unit element $1_{{\frak G}}=(1,0)$.
Indeed, let $I \neq \{0\}$ be a two-sided ideal of ${\frak G}$. 
We shall see that $I$ has an element of the form $(\xi,\alpha^{\prime}(x))$.
Let $(\xi,x) \in I$ with $x \in \g_0-\{0\}$ 
and let $y \in \g_0$ be arbitrary.
Using {\bf Lemma \ref{lema 1}.(i)-(ii)}, we deduce that,
$$
\aligned
\nu\left(\,(\xi,x)\,,\,(0,\alpha^{\prime}(y))\,\right)&=
\left(B(x,\alpha^{\prime}(y)),\xi \alpha^{\prime}(y)+x\alpha^{\prime}(y)\right)\\
\,&=\left(B_0(x,y),\xi \alpha^{\prime}(y)+x\alpha^{\prime}(y)\right) \in I,
\endaligned
$$
where we have used the fact that 
$B(x,\alpha^{\prime}(y))=B(x,k(y))=B_0(x,y)$ for all $x,y \in \g_0$.
Now, let $y \in \g_0$ be such that $B_0(x,y) \neq 0$. If $\xi \alpha^{\prime}(y)+x\alpha^{\prime}(y)=0$, 
we are done because in that case $B_0(x,y)(1,0) \in I$.
If $\xi \alpha^{\prime}(y)+x\alpha^{\prime}(y) \neq 0$, then 
$(B_0(x,y),\xi \alpha^{\prime}(y)+x\alpha^{\prime}(y)) \in I$, 
with $B_0(x,y) \neq 0$ and $\xi \alpha^{\prime}(y)+x\alpha^{\prime}(y) \neq 0$, and
$$
2\,x\,\alpha^{\prime}(y)=[x,y]-\alpha^{\prime}([h(x),y])+[x,y].
$$
Now, $[x,y] \in [\g_0,\g_0]=\Im(\alpha_0) \subset V^{\perp}=\Im(k)=\Im(\alpha^{\prime})$
(this follows from {\bf Prop. \ref{perfect hom-Lie algebra}.(iii)} and {\bf Lemma \ref{lema 31}.(iii)}).
On the other hand,
$\alpha^{\prime}([h(x),y])+[x,y] \in V^{\perp}=\Im(k)=\Im(\alpha^{\prime})$. 
Therefore, $2\,x\,\alpha^{\prime}(y) \in \Im(\alpha^{\prime})$, which proves our claim.

\medskip 
Let $(\xi,\alpha^{\prime}(x)) \in I-\{0\}$ with $\xi \alpha^{\prime}(x) \neq 0$.
Since $\alpha^{\prime}(x)^2=0$, 
$$
\nu\left(\,(\xi,\alpha^{\prime}(x)),(-\xi,\alpha^{\prime}(x))\right)
=(-\xi^{2}+B\left(\alpha^{\prime}(x)\,,\,\alpha^{\prime}(x)\right),0\,)\in I.
$$
If $-\xi^2+B(\alpha^{\prime}(x),\alpha^{\prime}(x)) \neq 0$, then $(1,0) \in I$.
Now suppose that,
\begin{equation}\label{clubsuit}
\left(\xi,\alpha^{\prime}(x)\right) \in I,\quad\text{and}\quad
-\xi^2+B\left(\alpha^{\prime}(x),\alpha^{\prime}(x)\right)=0.
\end{equation}
Then, for any $(\eta,\alpha^{\prime}(y)) \in {\frak G}$,
with arbitrary $\eta\in\Bbb F$ and $y \in \g$, we have,
$$
\aligned
\frac{1}{2}
&
\left(\nu
(
{\left(\xi,\alpha^{\prime}(x)),(\eta,\alpha^{\prime}(y)\right)}
)
+
\nu
(
{\left(\eta,\alpha^{\prime}(y)),(\xi,\alpha^{\prime}(x)\right)}
)
\right)
\\
&=
\left(
\xi \eta + B(\alpha^{\prime}(x),\alpha^{\prime}(y)),\xi \alpha^{\prime}(y)+\eta \alpha^{\prime}(x)
\right).
\endaligned
$$
Under the assumption \eqref{clubsuit}, we obtain,
$$
\xi \eta+B\left(\alpha^{\prime}(x),\alpha^{\prime}(y)\right)
=B\left(\,
\xi \alpha^{\prime}(y)+\eta \alpha^{\prime}(x)\,,\,\xi \alpha^{\prime}(y)+\eta \alpha^{\prime}(x)
\,\right),
$$
for any $\eta\in\Bbb F$ and $y \in \g$.
Whence,
$B(\alpha^{\prime}(x),\alpha^{\prime}(y))^2=\xi^2B(\alpha^{\prime}(x),\alpha^{\prime}(y))$, 
for all $y \in \g$. 
By polarization of this identity we get, for any $y,z\in\g$,
\begin{equation}\label{linealizacion}
B\left(\alpha^{\prime}(x),\alpha^{\prime}(y)\right)
B\left(\alpha^{\prime}(x),\alpha^{\prime}(z)\right)
=\xi^2B\left(\alpha^{\prime}(y),\alpha^{\prime}(z)\right).
\end{equation}
Since $\alpha^\prime$ is self-adjoint under $B$,
the non-degeneracy of $B$ leads to,
\begin{equation}\label{p1}
B\left(\alpha^{\prime}(x),\alpha^{\prime}(y)\right)\,
\alpha^\prime\left(\alpha^\prime(x)\right)
=\xi^2\,\alpha^\prime\left(\alpha^\prime(y)\right),
\quad \text{for any\ } y \in \g.
\end{equation}
Observe that for any $y \in \g_0$:
\begin{equation}
\label{p2}
\alpha^\prime\left(\alpha^\prime(y)\right)
=\alpha^\prime\left(k(y)\right)
=\alpha^\prime\left(\alpha_0(y)\right)
=k\left(\alpha_0(y)\right),
\end{equation}
and,
\begin{equation}\label{p2-2}
B\left(\alpha^{\prime}(x),\alpha^{\prime}(y)\right)
=B\left(k(x),k(y)\right)
=B\left(k(x),\alpha_0(y)\right)
=B_0\left(x,\alpha_0(y)\right).
\end{equation}
After substitution
of \eqref{p2} and \eqref{p2-2} in \eqref{p1}, 
we get,
$$
B\left(x,\alpha_0(y)\right)\,k\left(\alpha_0(x)\right)=\xi^2k(\alpha_0(y)),\quad \text{for all\ }y \in \g_0.
$$
We may now 
use  \textbf{Lemma \ref{lema 1}} and
apply $h$ on both sides to obtain, 
\begin{equation}\label{last1}
B(x,\alpha_0(y))\alpha_0(x)=\xi^2\alpha_0(y),\quad \text{for all\ }y \in \g_0.
\end{equation}
In particular, for $\mu(x,y) \in \g_0$, 
$\alpha_0(\mu(x,y))=[x,y]_0$
(see {\bf Lemma \ref{lema 2}.(v)}).
Applying this in \eqref{last1} we get,
$$
\aligned
\xi^2[x,y]&=\xi^2\alpha_0(\mu(x,y))=B_0(x,\alpha_0(\mu(x,y)))\,\alpha_0(x)\\
\, &=B_0(x,[x,y]_0)\,\alpha_0(x)=0,\ \ \text{for all\ }y \in \g_0.\\
\endaligned
$$
Then $x \in C(\g_0)=\Ker(\alpha_0)$. From \eqref{p2}, 
we have $\alpha^{\prime}\left(\alpha^{\prime}(x)\right)=0$. 
Now use in \eqref{linealizacion} the fact that 
$\alpha^{\prime}$ is $B$-symmetric, to obtain,
$B(\alpha^{\prime}(y),\alpha^{\prime}(z))=0$, for all $y,z \in \g_0$. But,
$$
B\left(\alpha^{\prime}(y),\alpha^{\prime}(z)\right)
=B\left(k(y),k(z)\right)
=B\left(\alpha_0(y),k(z)\right)
=B_0\left(\alpha_0(y),z\right)=0,
$$
thus implying that $\alpha_0=0$.
Thus, $\g_0$ is Abelian 
(see {\bf Cor. \ref{corolario zero}}), which is a contradiction.
Therefore, there must be an element of the form $(\xi,\alpha^{\prime}(x)) \in I$
with $-\xi^2+B\left(\alpha^{\prime}(x),\alpha^{\prime}(x)\right) \ne 0$,
which implies that $(1,0)$ belongs to $I$. 

\medskip
Now, let $[\,\cdot\,,\,\cdot\,]_{\nu}$ be the commutator in ${\frak G}$ defined
in terms of $\nu$. It is clear that for any $x,y\in\g$ and any scalars $\xi,\eta\in\Bbb F$,
$$
\left[(\xi,x),(\eta,y)\right]_{\nu} =
\nu\left((\xi,x),(\eta,y)\right)-
\nu\left((\eta,y),(\xi,x)\right)
=(0,\mu(x,y)).
$$
Now let $[\,\cdot\,,\,\cdot\,]_{\tilde{\nu}}:\widetilde{\frak G}\times\widetilde{\frak G}\to\widetilde{\frak G}$
be the induced skew-symmetric multiplication on $\widetilde{\frak G}=
{\frak G}/\Bbb F1_{\frak G}$
by the induced multiplication $\tilde{\nu}$ on this quotient.
Clearly, $\widetilde{\frak G}\simeq\g$, and,
$$
\left[\,[\!\![\,(0,x)\,]\!\!]\,,\,[\!\![\,(0,y)\,]\!\!]\,\right]_{\tilde{\nu}} 
\ \longleftrightarrow\  \mu(x,y),\quad\text{for all\ }x,y\in\g.
$$
If $\tilde{\alpha}:\widetilde{\frak G} \to \widetilde{\frak G}$ is
the linear map induced on the quotient by,
$$
(\xi,x)\ \mapsto\ (\xi,\alpha(x)),\quad \text{for all\ }\xi \in \F\ \text{and\ } x \in \g,
$$ 
it follows that $(\widetilde{\frak G},[\,\cdot\,,\,\cdot\,]_{\tilde{\nu}},\tilde{\alpha})$ is a Hom-Lie algebra
isomorphic to $(\g,\mu,\alpha^\prime)$.

\medskip
Finally, we shall prove the if $\g_0$ is a nilpotent Lie algebra, 
then ${\frak G}$ is simple for ${\nu}$.
Thus, assume $\g_0$ is nilpotent for $[\,\cdot\,,\,\cdot\,]_0$.
Let $I \neq \{0\}$ be
a right ideal of ${\frak G}$.
Proceeding as before, we may assume that there is an element
$(\xi,\alpha^{\prime}(x)) \in I$, such that $\xi \alpha^{\prime}(x) \neq 0$.
Make the assumption \eqref{clubsuit}, and let $y\in\g_0$ be arbitrary.
Then,
$$
\nu
\left(
(\xi,\alpha^{\prime}(x)),(0,\alpha^{\prime}(y))
\right)
=(B(\alpha^{\prime}(x),\alpha^{\prime}(y)),\xi \alpha^{\prime}(y)+\frac{1}{2}\alpha_0([x,y]_0)) \in I.
$$
Since $\Im(\alpha_0) \subset V^{\perp}=\Im(k)=\Im(\alpha^{\prime})$, (see {\bf Lemma \ref{lema 31}.(iii)}), the assumption \eqref{clubsuit} leads to,
$$
\aligned
B\left(\alpha^{\prime}(x),\alpha^{\prime}(y)\right)^2&=\xi^2B\left(\alpha^{\prime}(y),\alpha^{\prime}(y)\right)+
\displaystyle{\frac{1}{4}}\,B\left(\alpha_0([x,y]_0),\alpha_0([x,y]_0)\right),\\
\,&=\xi^2\,B\left(\alpha^{\prime}(y),\alpha^{\prime}(y)\right)+
\displaystyle{\frac{1}{4}}\,B\left(\alpha^{\prime}([x,y]),\alpha^{\prime}([x,y])\right),\\
\,&=\xi^2B\left(\alpha^{\prime}(y),\alpha^{\prime}(y)\right)+
\displaystyle{\frac{1}{4}}\,B\left([\alpha^{\prime}(x),y],[\alpha^{\prime}(x),y]\right),
\endaligned
$$
which holds true for any $y\in\g_0$.
Observe that we have used the fact that
$B(\alpha_0(x),\alpha_0(y))=B(\alpha(x),\alpha_0(y))=B(\alpha(x),\alpha(y))$ and $\alpha^{\prime}([x,y])=[\alpha^{\prime}(x),y]$, which follows from
$\Im(\alpha_0) \subset V^{\perp}=\Im(\alpha^{\prime})$ and $\alpha^{\prime} \in \Gamma_B(\g)$ (see {\bf Cor. \ref{corolario}}).
Applying the last identity to $y+z\in\g_0$ instead of $y\in\g_0$, we get,
for any $y, z\in\g_0$, 
\begin{equation}\label{linearizacion 2}
{
\aligned
B\left(\alpha^{\prime}(x),\alpha^{\prime}(y)\right)
\,
B\left(\alpha^{\prime}(x),\alpha^{\prime}(z)\right)
& =
\xi^2B\left(\alpha^{\prime}(y),\alpha^{\prime}(z)\right)
\\
& \quad +
\displaystyle{\frac{1}{4}}\,B\left([\alpha^{\prime}(x),y],[\alpha^{\prime}(x),z]\right).
\endaligned
}
\end{equation}
Using again the fact that $B$ is an invariant metric
in $\g$ with $\alpha^\prime\in\Gamma_B(\g)$, we get for any $y\in\g_0$,
\begin{equation}\label{linearizacion 3}
B\left(\alpha^{\prime}(x),\alpha^{\prime}(y)\right)
\,
\alpha^{\prime}\left(\alpha^{\prime}(x)\right)
=\xi^2\alpha^{\prime}\left(\alpha^{\prime}(y)\right)
-
\displaystyle{\frac{1}{4}}\,
\alpha^{\prime}\left([x,[x,\alpha^{\prime}(y)]]\right).
\end{equation}
Now put in this expression $\mu(x,y)\in\g_0$ instead of $y$
and use \eqref{delta producto-1} to make the simplification
$\alpha^{\prime}\left(\mu(x,y)\right)=k\left(\mu(x,y)\right)=[x,y]$
and conclude that,
\begin{equation}\label{linearizacion 3-2}
B\left(\alpha^{\prime}(x),\mu(x,y)\right)
= B\left(x,\alpha^{\prime}(\mu(x,y))\right)
= B(x,[x,y])=0.
\end{equation}
Thus, substituting \eqref{linearizacion 3-2} in \eqref{linearizacion 3} leads to:
$$
\aligned
\xi^2 \alpha^{\prime}([x,y])-\displaystyle{\frac{1}{4}}\,
\alpha^{\prime}
\left(
{\left[x,{\left[x,[x,y]\right]}\right]}
\right)
&=0,\quad\text{which is equivalent to}
\\
\xi^2 k([x,y])-\displaystyle{\frac{1}{4}}\,k\left(
{\left[x,{\left[x,[x,y]_{0}\right]_0}\right]_0}
\right)
&=0,
\quad\text{ for all }\, y \in \g_0.
\endaligned
$$
Since $k$ is injective, it follows that 
$4\, \xi^2\ad_0(x)(y)=\ad_0(x)^2(\ad_{\g_0}(x)(y))$, for all $y \in \g_0$. 
Thus, if $\g_0$ is a nilpotent Lie algebra and $\xi \neq 0$, 
then $\ad_{\g_0}(x)(y)=0$, for all $y \in \g_0$, 
which implies that
$x \in C(\g_0)=\Ker(\alpha_0)$
and by \eqref{p2}, 
$\alpha^{\prime}\left(\alpha^{\prime}(x)\right)=0$.
From \eqref{linearizacion 2}, this implies that 
$B\left(\alpha^{\prime}(y),\alpha^{\prime}(z)\right)=0$, for all $y,z \in \g_0$. 
Then, observe from \eqref{p2} that this would
imply $\alpha_0=0$ making $\g_0$ Abelian,
which is a contradiction (see {\bf Cor. \ref{corolario zero}}). 
Therefore, ${\frak G}$ 
can have no non-trivial right-ideals
if $\g_0$ is a nilpotent 
Lie algebra. The proof for left-ideals is similar.
\end{proof}

\medskip
\section{An illustrative non-trivial example}

Let $(\g_0,[\,\cdot\,,\,\cdot\,]_0,B_0)$ be a 6-dimensional $2$-step nilpotent
quadratic Lie algebra, where $\g_0=\a \oplus {\frak b}$,
$\a=\operatorname{Span}_{\F}\{a_1,a_2,a_3\}$ and
${\frak b}=\operatorname{Span}_{\F}\{b_1,b_2,b_3\}$.
Let $\sigma$ be the cyclic permutation $\sigma=(1\,2\,3)$.
The Lie bracket $[\,\cdot\,,\cdot\,]_0$ is given by,
$$
[\g_0,{\frak b}]_0=\{0\},
\quad\text{and}\quad 
[a_i,a_{\sigma(i)}]_0=b_{\sigma^2(i)},\quad (1 \le i \le 3).
$$
The invariant metric 
$B_0$ is defined by making ${\frak a}$ and ${\frak b}$
totally isotropic and choosing an invertible $3\times 3$
matrix $(\beta_{jk})$ such that
$B_0(a_j,b_k)=\beta_{jk}$.
We define 
the $B_0$-skew-symmetric derivations
$D_1,D_2,D_3\in\Der\g_0$, through,
$$
\aligned
D_i(a_{\sigma(i)})& =-D_{\sigma(i)}(a_i)=a_{\sigma^2(i)}+b_{\sigma^2(i)},
\\
D_{i}(b_{\sigma(i)}) & =-D_{\sigma(i)}(b_i)=b_{\sigma^2(i)},
\\
D_i(a_i)& =D_i(b_i)=0,
\endaligned
$$
holding true for $1\le i\le 3$.
Observe that the derivations $D_1,D_2,D_3$ are not inner. 
Were they inner, their image
would lie in $\operatorname{Span}_{\F}\{b_1,b_2,b_3\}
=[\g_0,\g_0]_0$, which is not the case.

\medskip
Let $V=\operatorname{Span}_{\F}\{v_1,v_2,v_3 \}$ be a $3$-dimensional vector space. 
Let $\g=\g_0 \oplus V$, and define 
$\theta:\g_0 \times \g_0 \rightarrow V$ by, $\theta(x,y)=\sum_{i=1}^3B_0(D_i(x),y)\,v_i$, for all $x,y \in \g_0$. Thus,
$$
\theta(a_i,a_{\sigma(i)})=v_{\sigma^2(i)}
$$
We use $\theta$ to define the Lie bracket on $\g$ as in \eqref{tc1}; that is,
$$
[a_i,a_{\sigma(i)}] = b_{\sigma^2(i)}+v_{\sigma^2(i)}
$$
It follows that the $9$-dimensional Lie algebra $\g$ thus defined is
a $3$-step nilpotent, quadratic Lie algebra. 
Now,  the symmetric bilinear form $B:\g \times \g \rightarrow \F$,
given by, $B(b_i,b_j)=B(a_i,v_j)=\delta_{ij}$, and
$B(b_i,a_j+v_k)=B(a_i,a_j)=B(v_i,v_j)=0$,
for all $1\le i,j,k\le 3$, yields an invariant metric on $\g$. 
Clearly, $\g$ has the vector space decomposition, 
$\g=\a \oplus {\frak b} \oplus V$, where both, $\a$ and $V$, are isotropic
subspaces under $B$, and ${{\frak b}}^{\perp}=\a\oplus V$. 
\medskip

We shall now describe the Hom-Lie algebras
$(\g_0,\mu_0,\alpha_0)$, $(\g,\mu,\alpha)$
and $(\g,\mu,\alpha^\prime)$ of 
{\bf Prop. \ref{Prop 3}}
and
{\bf Cor. \ref{corolario}}.
Note first that the linear map $h:\g \to \g_0$ 
of {\bf Lemma \ref{lema 1}} defined in \eqref{maps} is given by:
$$
h(a_{j})=0,\quad h(b_j)=a_{j},\quad h(v_j)=b_j,\quad \text{for all\ } 1 \leq j \leq 3.
$$
Now, according to {\bf Prop. \ref{Prop 3}},
the map $\mu:\g\times\g\to\g$ is defined thorugh $h$
and the Lie bracket $[\,\cdot\,,\,\cdot\,]$ in $\g$ as,
$\mu(x,y)=h([x,y])$ for all $x$ and $y$ in $\g$. Since $V \subset C(\g)$, then
for any $x,y\in\g_0$ and $u,v\in V$,
$$
\mu(x+u,y+v)=\mu_0(x,y)
$$
Observe that ${\frak b}=C(\g_0)$ is $B$-isotropic and 
$D_j({\frak b}) \subset {\frak b}$ for all $j$. Thus, 
$\mu({\frak b},{\frak b})=h([{\frak b},{\frak b}])=\{0\}$. Therefore, for all $1 \leq j,k \leq 3$ and for all $x \in \g_0$, the product $\mu$ is given by:
$$
\aligned
\mu(a_j,a_{\sigma(j)})&=h([a_j,a_{\sigma(j)}])=D_j(a_{\sigma(j)})=a_{\sigma^2(j)}+b_{\sigma^2(j)},\\
\mu(a_j,b_{\sigma(j)})&=h([a_j,b_{\sigma(j)}])=D_j(b_{\sigma(j)})=b_{\sigma^2(j)},\\
\mu(b_j,b_k)&=0,\\
\mu(v_j,x) & =h([v_j,x])=0.
\endaligned
$$
The linear map $\alpha_0:\g_0\to\g_0$ of {\bf Prop. \ref{Prop 3}}
that completes the description of the 
$6$-dimensional Hom-Lie algebra $(\g_0,\mu_0,\alpha_0)$ is given by,
$$
\alpha_0(a_j)=b_j, \quad \alpha_0(b_j)=0,\quad 1 \leq j \leq 3.
$$
In particular, $k:\g_0 \to \g$, which according to
{\bf Prop. \ref{Prop 3}} satisfies 
$k(x)=\alpha_0(x)+\sum_{i=1}^3B_0(a_i,x)v_i$, for all $x \in \g_0$,
is given by,
$$
k(a_j) = b_j,\quad k(b_j)=v_j,\quad 1 \le j \le 3.
$$
The invariance of $B_0$ under $\mu_0$,
is obtained from the $B_0$-skew-symmetry of the derivations 
$D_1$, $D_2$ and $D_3$ and from the fact that ${\frak b}$ is $\mu$-Abelian.
Thus, for any $x$ and $y$ in $\g_0$ we have,
$$
\aligned
B_0(\mu(a_j,x),y)& =B_0(D_j(x),y)\\
&=-B_0(x,D_j(y))\\
&=-B_0(x,\mu(a_j,x)).
\endaligned
$$
On the other hand, the maps $\alpha:\g\to\g$ and $\alpha^\prime:\g\to\g$
that define the $9$-dimensional Hom-Lie algebras $(\g,\mu,\alpha)$
and $(\g,\mu,\alpha^\prime)$
are respectively given by,
$\alpha(x+v)=\alpha_0(x)+v$ and
$\alpha^{\prime}(x+v)=k(x)$, for all $x \in \g_0$ and all $v\in V$.
These yield,
$$
\alpha(a_j) =b_j,\quad \alpha(b_j)=0,\quad \alpha(v_j)=v_j,\quad 1 \le j \le 3.
$$
Similarly
$$
\alpha^\prime(a_j) =b_j,\quad \alpha^{\prime}(b_j)=v_j,\quad \alpha^{\prime}(v_j)=0,\quad 1 \le j \le 3.
$$
Clearly, these two linear maps are different because 
$\Ker(\alpha^{\prime})=V$, while $\alpha\vert_V=\operatorname{Id}_V$.

\medskip

We shall finally describe the
bilinear {\it connection map\/} $(x,y) \mapsto xy$ 
in terms of the basis $\{\,a_j,b_k,v_{\ell}\mid\,1 \le j,k, \ell \le 3\,\}$ of $\g$. 
By {\bf Thm. \ref{teorema}}, we know that:
$$
2xy=\mu(x,y)-[h(x),y]+[x,h(y)],\quad \text{for all\ }x,y \in \g.
$$
Since $\mu(a_j,\cdot)|_{\g_0}=D_j$ and $h(a_j)=0$, for all $j$, we obtain:
$$
2a_jx=D_j(x)+[a_j,h(x)],\quad \text{for all\ }x \in \g.
$$
In particular, $2a_ja_k=D_j(a_k)=-D_k(a_j)=-2a_ka_j$ for all $j,k$. 
This agrees with the fact that the connection is anti-commutative
on the elements of $\Ker(h)={\frak a}$. On the other hand,
$$
2a_1b_2=2b_3+v_3,\quad 2a_2b_3=2b_1+v_1.\quad 2a_3b_1=2b_2+v_2.
$$
Analogously,
$$
2b_1a_2=v_3,\quad 2b_2a_3=v_1,\quad 2b_3a_1=v_2.
$$
This shows that the connection is neither commutative nor anti-commutative. 
We shall now compute the connection on ${\frak b}$. 
Since ${\frak b}$ is a $\mu$-Abelian subspace of $\g_0$
which is
invariant under $D_j$ for all $j$, we get,
$$
2b_jb_k=-[h(b_j),b_k]+[b_j,h(b_k)]=-[a_j,b_k]-[a_k,b_j],\quad 1\le j,k\le 3.
$$
Now, using the $B_0$-skew-symmetry of the derivations $D_j$
it is not difficult to prove that in fact $b_jb_k=0$ for all $1\le j,k\le 3$.

\medskip
Finally, observe that in this example $\g_0$ is a Nilpotent Lie algebra 
and that it satisfies the hypotheses
of {\bf Thm. \ref{Teorema Cartan-Killing}}. 
Then, the symmetric and $\mu_0$-invariant bilinear form $K_0$ 
for $(\g_0,\mu_0,\alpha_0)$ is zero.

\medskip
\section*{Acknowledgements}
The authors acknowledge the support received
through CONACYT Grant $\#$ A1-S-45886. 
The author RGD would also like to thank the support 
provided by CONACYT post-doctoral fellowship 000153.
Finally, GS acknowledges the
support provided by PROMEP grant UASLP-CA-228
and ASV acknowledges the support given by MB1411.

\end{document}